\newcounter{Theorem}
\theoremstyle{plain}
\newtheorem{thm}[Theorem]{Theorem}
\newtheorem{prop}[Theorem]{Proposition}
\newtheorem{cor}[Theorem]{Corollary}
\newtheorem{lemma}[Theorem]{Lemma}
\theoremstyle{remark}
\theoremstyle{definition}
\newtheorem{definition}[Theorem]{\bf Definition}
\numberwithin{Theorem}{section}
\newcommand{\bC}{\mathbb{C}}
\newcommand{\bG}{\mathbb{G}}
\newcommand{\bN}{\mathbb{N}}
\newcommand{\bP}{\mathbb{P}}
\newcommand{\bQ}{\mathbb{Q}}
\newcommand{\bR}{\mathbb{R}}
\newcommand{\bZ}{\mathbb{Z}}
\newcommand{\cA}{{\mathcal{A}}}
\newcommand{\cC}{{\mathcal{C}}}
\newcommand{\cD}{{\mathcal{D}}}
\newcommand{\cL}{{\mathcal{L}}}
\newcommand{\cM}{{\mathcal{M}}}
\newcommand{\cO}{{\mathcal{O}}}
\newcommand{\cS}{{\mathcal{S}}}
\newcommand{\cZ}{{\mathcal{Z}}}
\newcommand{\gp}{\mathfrak{p}}
\newcommand{\talpha}{\tilde{\alpha}}
\newcommand{\tP}{\tilde{P}}
\newcommand{\tZ}{\tilde{Z}}
\newcommand{\ualpha}{{\boldsymbol{\alpha}}}
\newcommand{\ubeta}{{\boldsymbol{\beta}}}
\newcommand{\ugamma}{{\boldsymbol{\gamma}}}
\newcommand{\uomega}{{\boldsymbol{\omega}}}
\newcommand{\utau}{{\boldsymbol{\tau}}}
\newcommand{\uX}{\mathbf{X}}
\newcommand{\uz}{\mathbf{z}}
\newcommand{\uZ}{\underline{Z}}
\newcommand{\disp}{\displaystyle}
\newcommand{\et}{\quad\mbox{and}\quad}
\newcommand{\fleche}{\longrightarrow}
\newcommand{\Ga}{\bG_\mathrm{a}}
\newcommand{\Gm}{\bG_\mathrm{m}}
\newcommand{\habs}{h_{\mathrm{abs}}}
\newcommand{\Qbar}{\overline{\bQ}}
\newcommand{\Res}{\mathrm{Res}}
\newcommand{\tbigwedge}{{\textstyle \bigwedge}}
\DeclareMathOperator{\dist}{dist}
\begin{document}

\baselineskip=14.5pt %14.7pt %15pt

\title[A small value estimate involving translations]
{A small value estimate in dimension two\\ involving translations by rational points}
\author[V. Nguyen]{Ngoc Ai Van Nguyen}
\author[D. Roy]{Damien ROY}
\address{
   D\'epartement de Math\'ematiques\\
   Universit\'e d'Ottawa\\
   585 King Edward\\
   Ottawa, Ontario K1N 6N5, Canada}
\email[Ngoc Ai Van Nguyen]{nnaivan@gmail.com}
\email[Damien Roy]{droy@uottawa.ca}
\subjclass[2010]{Primary 11J85; Secondary 11J81}
\keywords{}
\thanks{Research of both authors partially supported by NSERC}

\begin{abstract}
We show that, if a sequence of non-zero polynomials in $\bZ[X_1,X_2]$
take small values at translates of a fixed point $(\xi,\eta)$ by
multiples of a fixed rational point within the group $\bC\times\bC^*$,
then $\xi$ and $\eta$ are both algebraic over $\bQ$.  The precise
statement involves growth conditions on the degree and norm of
these polynomials as well as on their absolute values at these
translates.  It is essentially best possible in some range of the
parameters.
\end{abstract}

%%%%%%%%%%%%%%%%%%%%%%%%%%%%%%%%
%%%%%%%%%%%%%%%%%%%%%%%%%%%%%%%%%

\maketitle

\section{Introduction}
 \label{sec:intro}

Consider the commutative algebraic group $\bC\times\bC^*=
(\Ga\times\Gm)(\bC)$ with group law given by addition on the first
factor and multiplication on the second.  Fix a choice of
\begin{equation}
 \label{intro:eq:points}
 (\xi, \eta)\in \bC\times\bC^* \et (r,s)\in \bQ\times\bQ^*
 \ \text{with}\ r\neq 0 \ \text{and}\ s\neq \pm 1.
\end{equation}
In this paper, we are interested in understanding under which
conditions a non-zero polynomial $P\in\bZ[X_1,X_2]$ can take
small absolute values at translates of $(\xi,\eta)$ by
multiples of $(r,s)$.  We first note that, if $P$ has degree
$D$ and vanishes at the point $(\xi+ir,\eta s^i)$ for
$i=0,1,\dots,D$, then $\xi$ and $\eta$ must be algebraic over
$\bQ$ (see the short argument at the end of Section \ref{sec:height}).
So, it is reasonable that our main result below concludes in
the same way.  Here the norm $\|P\|$ of a polynomial $P$ is the
largest absolute value of its coefficients and, for a real number
$x$, the expression $\lfloor x\rfloor$ denotes its integer part.

\begin{thm}
 \label{intro:thm}
Let $(\xi, \eta)$ and $(r,s)$ be as in \eqref{intro:eq:points},
and let $\sigma, \beta, \nu \in \bR$ satisfying
\begin{equation}
 \label{intro:thm:eq1}
 1 \le \sigma < 2, \quad
 \beta > \sigma +1, \quad
 \nu >
  \begin{cases}
   2+\beta-\sigma &\text{if $\sigma\ge 3/2$,}\\
   \disp 2+\beta-\sigma+\frac{(\sigma-1)(3-2\sigma)}{2+\beta-2\sigma}
    &\text{if $\sigma<3/2$.}
  \end{cases}
\end{equation}
Suppose that, for each sufficiently large positive integer $D$,
there exists a non-zero polynomial $P_D\in \bZ[X_1,X_2]$ such that
\begin{equation}
 \label{intro:thm:eq2}
 \deg P_D \le D, \quad
 \|P_D\| \le e^{D^{\beta}}, \quad
 \max_{0\le i < 4 \lfloor D^{\sigma}\rfloor}
   |P_D(\xi + ir, \eta s^i)| \le e^{-D^{\nu}}.
\end{equation}
Then, $\xi$ and $\eta$ are algebraic over $\bQ$. Moreover,
we have $P_D(\xi + ir, \eta s^i)=0$
$(0 \le i < 4 \lfloor D^{\sigma}\rfloor)$ for each
sufficiently large $D$.
\end{thm}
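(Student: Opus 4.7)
The proof naturally divides into two stages:
(A) establishing that $\xi$ and $\eta$ are algebraic over $\bQ$, and
(B) upgrading algebraicity to the vanishing statement $P_D(\xi+ir,\eta s^i)=0$ via Liouville's inequality.

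For stage (A), the plan is to argue by contradiction: assume $\mathrm{trdeg}_{\bQ}\bQ(\xi,\eta)\ge 1$ and produce, from the family $\{P_D\}$, a contradiction with an algebraic independence criterion of Philippon--Laurent--Roy type. The auxiliary polynomials feeding into the criterion are built from $P_D$ and its translates along the orbit of $(r,s)$. Concretely, for each large $D$ and each $j$ in a suitable range, set $P_D^{(j)}(X_1,X_2)=P_D(X_1+jr,\, s^j X_2)$: this has degree $\le D$, norm at most $e^{D^\beta}(1+|jr|)^D|s|^{jD}$, and takes the same small values as $P_D$ but at orbit points shifted by $j$. Taking the resultant $R_D=\Res_{X_2}(P_D,P_D^{(1)})$ (and iterating if necessary) yields either a non-zero polynomial in $\bZ[X_1]$ with small values at $\xi+ir$ for many $i$---to which a classical one-variable small-value estimate of Gelfond type applies, forcing $\xi\in\Qbar$---or else a common factor of $P_D$ and $P_D^{(1)}$ encoding directly an algebraic relation on $\xi,\eta$. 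A symmetric resultant in $X_1$ handles $\eta$. Either outcome contradicts the transcendence assumption.

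The main obstacle is the precise calibration of the numerical bounds in \eqref{intro:thm:eq1}, and in particular the case distinction at $\sigma=3/2$. The natural first-order condition $\nu>2+\beta-\sigma$ arises from balancing the height of the resultant (roughly $D\cdot D^\beta\cdot|s|^{jD}$ with $j$ proportional to $D^\sigma$) against the hypotheses of the one-variable criterion. The extra correction $(\sigma-1)(3-2\sigma)/(2+\beta-2\sigma)$ in the regime $\sigma<3/2$ reflects the cost of iterating the resultant construction (or applying a refined multiplicity estimate) when the orbit is too short for a single resultant to exploit fully; it vanishes at both endpoints $\sigma=1$ and $\sigma=3/2$ for structural reasons. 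Maintaining non-triviality of all intermediate resultants, tracking their degrees and heights through the iteration, and matching the quantitative hypotheses of the algebraic independence criterion constitute the delicate combinatorial heart of the argument.

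For stage (B), set $K=\bQ(\xi,\eta)$ and $d=[K:\bQ]<\infty$. For each $0\le i<4\lfloor D^\sigma\rfloor$, the value $v=P_D(\xi+ir,\eta s^i)$ lies in $K$, and its Weil height satisfies $h(v)\le D^\beta+c D^{1+\sigma}$ for some constant $c=c(\xi,\eta,r,s)$. If $v\neq 0$, Liouville's inequality gives $\log|v|\ge -d\,h(v)-O(1)\ge -d(D^\beta+cD^{1+\sigma})$. Combined with the hypothesis $|v|\le e^{-D^\nu}$, this would force $\nu\le\max(\beta,\,1+\sigma)$, contradicting \eqref{intro:thm:eq1} (which yields $\nu>2+\beta-\sigma>\max(\beta,\,1+\sigma)$, using $\beta>\sigma+1$ and $\sigma<2$). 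Hence $v=0$ for all such $i$ and all sufficiently large $D$, proving the second assertion.
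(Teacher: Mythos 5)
Your stage (B) argument---Liouville's inequality applied once $\xi,\eta\in\Qbar$ is known---is exactly what the paper does at the start of Section~\ref{sec:proof}, and the height bookkeeping you give is correct: $h(v)\ll D^\beta+D^{1+\sigma}$ while $\nu>2+\beta-\sigma$ forces $\nu>\max(\beta,1+\sigma)$ given the constraints $\beta>\sigma+1$, $\sigma<2$.

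Stage (A), however, is not a proof but a speculative plan, and it is not the route the paper takes. You propose iterated resultant elimination ($R_D=\Res_{X_2}(P_D,P_D^{(1)})$, etc.) followed by a one-variable Gel'fond-type criterion. You never supply the crucial pieces: you do not control the sizes of the resultants at the point (only their heights), you do not show the intermediate resultants are non-zero, you do not explain how the branch ``common factor encoding an algebraic relation on $\xi,\eta$'' leads anywhere, and you explicitly defer the derivation of the exponent $(\sigma-1)(3-2\sigma)/(2+\beta-2\sigma)$ to ``iterating the resultant construction,'' which you do not carry out. A resultant scheme of this kind in $\bG_\mathrm{a}\times\bG_\mathrm{m}$ is exactly the ``complicated division process'' that the paper says it was designed to \emph{avoid}, and it is far from clear it would reproduce the sharp $\nu$-threshold.

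The paper's actual stage (A) is different in kind. After reducing to $|s|>1$ and homogenizing (Section~\ref{sec:not}), it introduces the orbit points $\gamma_i\in\bP^2(\bC)$ and the substitution automorphism $\Phi$. The engine is not elimination but the Laurent--Roy convex-body/height machinery: from the family $\{\tP_D\}$ one constructs (Proposition~\ref{cons:prop2}) a sequence of $0$-dimensional $\bQ$-subvarieties $Z_D\subset\bP^2(\bC)$ whose relative height $h_{\cC_D}(Z_D)$ is very negative, with degree and height controlled via a zero estimate (Proposition~\ref{cons:prop1}). An interpolation estimate (Section~\ref{sec:interp}, an improvement of Mahler's formula) converts negativity of $h_{\cC_D}(Z_D)$ into the statement that some translate $\tZ_D$ has points extremely close to the orbit $\Gamma(T^*)$ (Proposition~\ref{cons:prop:m}). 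The contradiction with $(\xi,\eta)\notin\Qbar\times\Qbar^*$ is then obtained by a parameter-counting argument in Section~\ref{sec:proof} split into two cases at $\sigma=3/2$; Case~1 uses the new Proposition~\ref{height:prop} (a lower bound for the distance between points of two \emph{distinct} $0$-dimensional $\bQ$-subvarieties in terms of their degrees and heights), which is precisely the ingredient that makes the bound $\nu>2+\beta-\sigma$ sharp for $\sigma\ge 3/2$, and Case~2 yields the extra correction term in the regime $\sigma<3/2$. None of this is present in your sketch, so stage (A) as written has a genuine gap: the entire technical heart is named but not supplied, and the claimed mechanism (one-variable Gel'fond criterion via resultants) does not match the phenomenon that actually produces the stated constraints on $\nu$.
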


This statement improves on \cite[Theorem 1.1.5]{Ng} and
is analogous to \cite[Theorem 1.1]{R2012}
in many aspects.  In the latter result, the last of the
conditions \eqref{intro:thm:eq2} is replaced by
\[
 \max_{0\le i < 3 \lfloor D^{\sigma}\rfloor}
   |\cD_1^iP_D(\xi,\eta)| \le e^{-D^{\nu}}
 \quad\text{where}\quad
 \cD_1 = \frac{\partial}{\partial X_1}
          + X_2\frac{\partial}{\partial X_2},
\]
and the constraints on the parameters differ slightly from
\eqref{intro:thm:eq1}.  Although our present result is less
relevant to the conjectures of \cite{R2001}, it has the
following consequence.

\begin{cor}
 \label{intro:cor}
Let $\ell$ be a positive integer, let $(\xi_j, \eta_j)\in\bC\times\bC^*$
for $j=1,\dots,\ell$, and let $(r,s)$, $\sigma$, $\beta$, $\nu$ as in
Theorem \ref{intro:thm}.
Suppose that, for each sufficiently large positive integer $D$,
there exists a non-zero polynomial $P_D\in \bZ[X_1,X_2]$ satisfying
$ \deg P_D \le D$, $\|P_D\| \le e^{D^{\beta}}$ and
\begin{equation}
 \label{intro:cor:eq}
 |P_D(ir+m_1\xi_1+\cdots+m_\ell\xi_\ell,\, s^i\eta_1^{m_1}\cdots\eta_\ell^{m_\ell})|
   \le e^{-D^{\nu}}.
\end{equation}
for any choice of integers $i,m_1,\dots,m_\ell$ with $0\le i\le 4D^\sigma$
and $0\le m_1,\dots,m_\ell\le 2D^{(2-\sigma)/\ell}$.
Then, $1,\xi_1,\dots,\xi_\ell$ are linearly dependent over $\bQ$.
\end{cor}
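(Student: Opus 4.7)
The plan is to argue by contradiction. Assume that $1,\xi_1,\dots,\xi_\ell$ are linearly independent over $\bQ$, and aim to derive $P_D=0$ for all sufficiently large $D$. For each tuple $\um=(m_1,\dots,m_\ell)\in\bN^\ell$, set $\xi_\um:=\sum_{j=1}^\ell m_j\xi_j\in\bC$ and $\eta_\um:=\prod_{j=1}^\ell \eta_j^{m_j}\in\bC^*$. Since $(2-\sigma)/\ell>0$, for each fixed $\um$ and every sufficiently large $D$ one has $m_j\le 2D^{(2-\sigma)/\ell}$ for all $j$; the hypothesis \eqref{intro:cor:eq} then gives $|P_D(ir+\xi_\um,s^i\eta_\um)|\le e^{-D^\nu}$ for $0\le i<4\lfloor D^\sigma\rfloor$. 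Thus the sequence $(P_D)_D$ satisfies the hypotheses of Theorem \ref{intro:thm} with $(\xi,\eta)$ replaced by $(\xi_\um,\eta_\um)$, and the theorem produces the vanishing $P_D(ir+\xi_\um,s^i\eta_\um)=0$ for $0\le i<4\lfloor D^\sigma\rfloor$ and every sufficiently large $D$.

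Letting $\um$ vary, I conclude that for each sufficiently large $D$ the polynomial $P_D$ vanishes on
\[
\Omega_D:=\bigl\{(ir+\xi_\um,\,s^i\eta_\um):0\le i<4\lfloor D^\sigma\rfloor,\ 0\le m_j\le \lfloor 2D^{(2-\sigma)/\ell}\rfloor\text{ for all }j\bigr\}.
\]
The $\bQ$-linear independence of $1,\xi_1,\dots,\xi_\ell$ makes these points pairwise distinct: an equality $ir+\xi_\um=i'r+\xi_{\um'}$ rearranges to $(i-i')r=\sum_j(m_j'-m_j)\xi_j$, and since $(i-i')r\in\bQ\cdot 1$, linear independence forces $m_j=m_j'$ for all $j$, whence $i=i'$ because $r\neq 0$. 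Therefore
\[
|\Omega_D|=4\lfloor D^\sigma\rfloor\cdot(\lfloor 2D^{(2-\sigma)/\ell}\rfloor+1)^\ell\gtrsim 2^{\ell+2}D^2.
\]

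To contradict $P_D\ne 0$ of degree $\le D$, I would appeal to a zero estimate on the commutative algebraic group $G=\Ga\times\Gm$. The set $\Omega_D$ is the image of an $(\ell+1)$-dimensional box in $\bZ^{\ell+1}$ under the injective homomorphism $(i,m_1,\dots,m_\ell)\mapsto i(r,s)+\sum_jm_j(\xi_j,\eta_j)$ into $G(\bC)$. Philippon's zero-estimate theorem asserts that a nonzero polynomial of degree $\le D$ can vanish on such a box only if the generators lie in a proper connected algebraic subgroup of $G$, with quantitative control on the product of the side-lengths. Since $r\ne 0$, $s\ne\pm 1$ and $1,\xi_1,\dots,\xi_\ell$ are $\bQ$-linearly independent, the generators $(r,s),(\xi_1,\eta_1),\dots,(\xi_\ell,\eta_\ell)$ do not lie in any proper connected algebraic subgroup of $G$, so only the obstruction from the trivial subgroup applies, yielding $|\Omega_D|\le cD^2$ for an absolute constant $c$. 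Combined with $|\Omega_D|\gtrsim 2^{\ell+2}D^2$, this is the desired contradiction.

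The hard part is this final zero-estimate step, and in particular making it work uniformly for every $\ell\ge 1$: for small $\ell$ the factor $2^{\ell+2}$ may not comfortably exceed Philippon's implicit constant, so one likely has to exploit the rationality of $(r,s)$ (which sharpens the subgroup analysis, since the generator $(r,s)$ itself has bounded height) or pass to a more favorable projective embedding of $\Ga\times\Gm$ in which the trivial-subgroup bound is tight.
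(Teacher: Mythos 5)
Your overall strategy matches the paper's: apply Theorem~\ref{intro:thm} to get information about the points, then feed the resulting vanishing into a zero estimate on $\Ga\times\Gm$. But there are two concrete gaps, the second of which is the one the paper's proof is specifically designed to avoid.

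First, a uniformity problem in the way you invoke Theorem~\ref{intro:thm}. You apply the theorem with $(\xi,\eta)$ replaced by $(\xi_\um,\eta_\um)$ for each $\um$ and cite the ``Moreover'' clause to get $P_D(ir+\xi_\um,s^i\eta_\um)=0$ for $D$ large. But ``$D$ large enough'' in the theorem depends on the point $(\xi_\um,\eta_\um)$, and for a given $D$ you need vanishing for \emph{all} $\um$ with $0\le m_j\le 2D^{(2-\sigma)/\ell}$, a range that grows with $D$. The quantifier exchange is not justified as written. The paper sidesteps this: apply Theorem~\ref{intro:thm} only finitely many times (e.g.\ with $\um=e_j$ for $j=1,\dots,\ell$) to conclude $\xi_j,\eta_j\in\Qbar$, and then use Liouville's inequality, which gives a lower bound at \emph{every} admissible point that is uniformly dwarfed by $e^{-D^\nu}$ once $D$ is large; this yields vanishing on all of $\Omega_D$ at once.

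Second, and more importantly, the zero-estimate step is not set up correctly. The explicit forms of Philippon's zero estimate (the paper cites \cite[Th\'eor\`eme 2.1]{PhLZ} in the form of \cite[Theorem 5.1]{WaLivre}) bound the cardinality of a set $\Sigma$ under the hypothesis that the polynomial vanishes on the \emph{sumset} $\Sigma+\Sigma$; they do not directly bound $|\Omega|$ from vanishing on $\Omega$. The ranges $0\le i\le 4D^\sigma$ and $0\le m_j\le 2D^{(2-\sigma)/\ell}$ in the corollary's hypothesis are chosen precisely so that $\Omega_D=\Sigma_D+\Sigma_D$, where $\Sigma_D$ uses the halved ranges $0\le i\le 2D^\sigma$, $0\le m_j\le D^{(2-\sigma)/\ell}$. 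Because the two projections of $\Sigma_D$ on $\bC$ and $\bC^*$ each have at least $\lfloor 2D^\sigma\rfloor+1>2D$ elements, the obstruction subgroups $\{0\}\times\Gm$ and $\Ga\times\{1\}$ are excluded, and the zero estimate gives $|\Sigma_D|\le 2D^2$. If $1,\xi_1,\dots,\xi_\ell$ were $\bQ$-independent then the points of $\Sigma_D$ would be distinct and $|\Sigma_D|\ge(\lfloor 2D^\sigma\rfloor+1)(\lfloor D^{(2-\sigma)/\ell}\rfloor+1)^\ell>2D^2$, a contradiction --- no implicit constants to fight. Your worry that ``$2^{\ell+2}$ may not comfortably exceed Philippon's implicit constant'' is the symptom: once you see the sumset structure and apply the estimate to $\Sigma_D$ rather than to $\Omega_D$, the numerical margin comes for free.

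In short: the skeleton is right, but you need (i) to pass through algebraicity of the $\xi_j,\eta_j$ and Liouville to get uniform vanishing, and (ii) to recognize $\Omega_D$ as the double sumset $\Sigma_D+\Sigma_D$ and apply the zero estimate to $\Sigma_D$, where the arithmetic of the parameter choices makes the contradiction immediate.
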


In \cite[Conjecture 2]{R2001}, it is assumed that $1,\xi_1,\dots,\xi_\ell$
are linearly independent over $\bQ$, and the conclusion is instead an upper
bound on the transcendence degree over $\bQ$ of the field generated
by $\xi_1,\dots,\xi_\ell,\eta_1,\dots,\eta_\ell$.  If, in the present
situation, a similar statement is true, it would require a smaller
value of the parameter $\nu$, for example a value compatible
with Waldschmidt's general construction of auxiliary polynomial
\cite[Th\'eor\`eme 3.1]{Wa} when $(r,s)$, $(\xi_1,\eta_1),\dots,
(\xi_\ell,\eta_\ell)$ all belong to a one-parameter analytic subgroup
of $\bC\times\bC^*$.

To derive Corollary \ref{intro:cor} from Theorem \ref{intro:thm},
we first apply the theorem
to deduce that $\xi_j$ and $\eta_j$ belong to the algebraic closure
$\Qbar$ of $\bQ$ in $\bC$ for $j=1,\dots,\ell$.  By Liouville's
inequality, we then conclude that, for sufficiently large $D$, the
left hand side of \eqref{intro:cor:eq} vanishes for all admissible
choices of $i,m_1,\dots,m_\ell$.  Thus $P_D$ vanishes on the sumset
$\Sigma_D+\Sigma_D$ where $\Sigma_D$ consists of all points
$(ir+m_1\xi_1+\cdots+m_\ell\xi_\ell,\, s^i\eta_1^{m_1}\cdots\eta_\ell^{m_\ell})$
with $0\le i\le 2D^\sigma$ and $0\le m_1,\dots,m_\ell\le D^{(2-\sigma)/\ell}$.
Since the projections of $\Sigma_D$ on both factors of $\bC\times\bC^*$
have cardinality at least $\lfloor 2D^\sigma\rfloor+1 > 2D$, it follows
from Philippon's zero estimate \cite[Th\'eor\`eme 2.1]{PhLZ}, in
the explicit form of \cite[Theorem 5.1]{WaLivre}, that $\Sigma_D$
then has cardinality at most $2D^2$ and so $1,\xi_1,\dots,\xi_\ell$
must be linearly dependent over $\bQ$ (see also \cite[Main Theorem]{MW}).

Coming back to Theorem \ref{intro:thm}, we note that, for $\sigma\ge 3/2$,
our constraint on $\nu$  in \eqref{intro:thm:eq1} is best possible.
Indeed, a simple application of Dirichlet's box principle shows the
following.  If $\sigma,\beta,\nu>0$ satisfy $\sigma<2$, $\beta>2\sigma-1$
and $\nu<2+\beta-\sigma$, then, for each sufficiently large integer
$D$, there exists a non-zero polynomial $P_D\in\bZ[X_1,X_2]$
satisfying \eqref{intro:thm:eq2}, independently of the
nature of $\xi$ and $\eta$.

Moreover, the constraint $\sigma\ge 1$ in \eqref{intro:thm:eq1}
is necessary.  Indeed, if we assume $0<\sigma<1$,
$\beta>2\sigma$ and $\nu>0$, then, a result of
Philippon in \cite[Appendix]{Ph}, adapted from a construction
of Khintchine, shows the existence of two algebraically independent
complex numbers $\xi$ and $\eta$ with the following property.
For each integer $D\ge 1$, there exists a non-zero linear form
$L_D(X_1,X_2)\in\bZ+\bZ X_1+\bZ X_2$ with $\|L_D\|\le D$ and $|L_D(\xi,\eta)|
\le \exp(-D^\nu-D^\beta)$.  Choose an integer $m\ge 1$ such that $mr$ and
$ms^{-1}$ are integers.  Then, for each sufficient large integer
$D$, the polynomial
$P_D=\prod_{0\le i< 4\lfloor D^\sigma\rfloor}(m^iL_D(X_1-ri,X_2s^{-i}))$
has integer coefficients and satisfies \eqref{intro:thm:eq2}.
However, $\xi$ and $\eta$ are transcendental over $\bQ$.

The proof of our main theorem is, for its first part, similar to that
of \cite[Theorem 1.1]{R2012} and we take advantage of this by simply
indicating how the corresponding arguments of \cite{R2012} have to be
modified.  However, there are three new ingredients in the proof which
have independent interest and could be applied to other situations.
The first one, in Section \ref{sec:interp}, is an improvement on
a formula of Mahler from \cite{Mah}.  It implies the interpolation
estimates that we need here as well as the one of \cite{R2012}.
The second ingredient is a simple idea which avoids,
in the present situation, the complicated division process
behind \cite[Proposition 3.7]{R2012}, also used in \cite{Ng}.
It allows us to remove the condition $\nu>2+\sigma$ from
\cite[Theorem 1.1.5]{Ng}.  The third ingredient is a lower bound for
the distance between points from
two distinct $0$-dimensional $\bQ$-subvarieties of $\bP^2(\bC)$
(see Section \ref{sec:height}).  This result allows us to improve the
constraint on $\nu$ from \cite[Theorem 1.1.5]{Ng} and to obtain a
best possible constraint when $\sigma\ge 3/2$ as mentioned above.
However, it does not seem to apply to the situation of \cite{R2012}.
It would be interesting to know if the constraint on $\nu$ in
our main theorem could be improved to $\nu>2+\beta-\sigma$ for
any value of $\sigma$ in the range $1\le \sigma<2$.

%%%%%%%%%%%%%%%%%%%%%%%%%%%%%%%%
%%%%%%%%%%%%%%%%%%%%%%%%%%%%%%%%%

\section{Notation and preliminary remarks}
 \label{sec:not}

Recall that we fixed $(\xi, \eta) \in \bC\times\bC^*$
and $(r,s) \in \bQ\times\bQ^*$ with $r\neq 0$ and $s\neq \pm 1$.
However, we will assume from now on that $|s|>1$ because of
the following simple observation.

\begin{lemma}
 \label{not:lemma}
Suppose that Theorem \ref{intro:thm} holds when $|s|>1$.  Then
it holds in general.
\end{lemma}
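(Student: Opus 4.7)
The plan is to reduce the case $|s| < 1$ to the assumed $|s| > 1$ case by replacing $(r,s)$ with $(\tilde r, \tilde s) := (-r, s^{-1})$, for which $|\tilde s| > 1$, while keeping $(\tilde\xi, \tilde\eta) := (\xi, \eta)$. The underlying observation is an index reversal: setting $c := 4\lfloor D^\sigma \rfloor - 1$, the sets of points $\{(\xi + ir, \eta s^i) : 0 \le i \le c\}$ and $\{(\xi + cr - jr, \eta s^c s^{-j}) : 0 \le j \le c\}$ coincide via $i = c - j$. Viewed from the new base point $(\xi + cr, \eta s^c)$, these are precisely the translates of that point by multiples of $(\tilde r, \tilde s)$.

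To make the base point independent of $D$, I would absorb the shift $(cr, s^c)$ into the polynomial. Writing $r = p_r/q_r$ and $s = p_s/q_s$ in lowest terms with $q_r, q_s \ge 1$, I define
\[
 \tilde P_D(X_1, X_2) := q_r^{D}\, q_s^{cD}\, P_D(X_1 + cr,\; X_2\, s^c).
\]
The prefactor $q_r^D q_s^{cD}$ clears all denominators, so $\tilde P_D \in \bZ[X_1, X_2] \setminus \{0\}$, and clearly $\deg \tilde P_D \le D$. By construction,
\[
 \tilde P_D(\xi - jr,\; \eta\, s^{-j}) = q_r^{D}\, q_s^{cD}\, P_D\bigl(\xi + (c-j) r,\; \eta\, s^{c-j}\bigr),
\]
so the small-value hypothesis on $P_D$ transfers to a small-value estimate for $\tilde P_D$ at the translates of $(\xi,\eta)$ by multiples of $(\tilde r, \tilde s)$. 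Crude binomial bounds on the expansion of $P_D(X_1 + cr, X_2 s^c)$, combined with $|s| < 1$ (whence $|s^c|^b \le 1$) and $c \le 4 D^\sigma$, yield
\[
 \|\tilde P_D\| \le e^{D^\beta + O(D^{\sigma+1})} \quad\text{and}\quad |\tilde P_D(\xi - jr,\, \eta s^{-j})| \le e^{-D^\nu + O(D^{\sigma+1})}.
\]

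Since $\beta > \sigma + 1$ and $\nu > 2 + \beta - \sigma > \sigma + 1$ (using $\sigma < 2$), the $O(D^{\sigma+1})$ errors are negligible, so for any fixed $\beta' > \beta$, $\nu' < \nu$, and $D$ large enough, the polynomials $\tilde P_D$ satisfy $\|\tilde P_D\| \le e^{D^{\beta'}}$ and $|\tilde P_D(\xi - jr, \eta s^{-j})| \le e^{-D^{\nu'}}$. By continuity of the right-hand side of \eqref{intro:thm:eq1} in $\beta$ and the strictness of the hypothesized inequalities, $\beta'$ and $\nu'$ can be chosen so that $(\sigma,\beta',\nu')$ still satisfies \eqref{intro:thm:eq1}. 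Applying Theorem \ref{intro:thm} in the assumed $|s|>1$ case to $(\tilde P_D)$ with data $(\xi,\eta,-r,s^{-1})$ then yields the algebraicity of $\xi$ and $\eta$, together with the vanishing $\tilde P_D(\xi - jr, \eta s^{-j}) = 0$ for all $0 \le j < 4\lfloor D^\sigma \rfloor$ and large $D$; reversing the reindexing $i = c - j$ recovers the desired vanishing of $P_D$ at the points $(\xi + ir, \eta s^i)$.

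The only delicate point is the small parameter adjustment $(\beta, \nu) \to (\beta', \nu')$ needed to absorb the multiplicative factor $q_r^D q_s^{cD} = e^{O(D^{\sigma+1})}$; this is a routine continuity check against the strict inequalities in \eqref{intro:thm:eq1}, and I do not anticipate any more substantial obstacle.
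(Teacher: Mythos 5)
Your proof is correct, and it takes a genuinely different route from the paper's. The paper keeps $\xi$ and $r$ fixed and replaces $(\eta, s)$ by $(\eta^{-1}, s^{-1})$ via the polynomial $\tilde P_D = X_2^{\lfloor D/2 \rfloor} P_{\lfloor D/2 \rfloor}(X_1, X_2^{-1})$, using the degree reserve $D - \lfloor D/2\rfloor$ to clear the powers of $X_2^{-1}$; the only parameter adjustment needed is to shrink $\nu$ slightly. You instead keep $(\xi,\eta)$ fixed, replace $(r,s)$ by $(-r, s^{-1})$, and compose with the $D$-dependent translation $(X_1,X_2)\mapsto(X_1+cr, X_2 s^c)$, clearing denominators with $q_r^D q_s^{cD}$. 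Both transformations introduce a multiplicative error of size $e^{O(D^{\sigma+1})}$, which is absorbed by a small perturbation of the exponents using $\beta > \sigma+1$ and $\nu > \sigma+1$; your version needs to perturb both $\beta$ and $\nu$ rather than just $\nu$, but the continuity argument is routine, as you note. One genuine advantage of your route: because you do not halve the degree index, the number of translates on which $\tilde P_D$ is small is exactly $4\lfloor D^\sigma\rfloor$, so the hypothesis of the theorem for the transformed data is met verbatim; the paper's choice of $P_{\lfloor D/2\rfloor}$ only controls $\tilde P_D$ at $4\lfloor \lfloor D/2\rfloor^\sigma\rfloor < 4\lfloor D^\sigma\rfloor$ translates, a discrepancy the paper's proof passes over without comment and which would require a further (easy but unstated) adjustment of $\sigma$ to repair. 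Your argument sidesteps this entirely.
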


\begin{proof}
Suppose that the hypotheses of the theorem are satisfied for
some $s$ with $|s|<1$.  We choose $\epsilon>0$ sufficiently small so that the
condition \eqref{intro:thm:eq1} of the theorem
still holds with $\nu$ replaced by $\nu-\epsilon$.  For each sufficiently
large integer $D$, the polynomial $\tP_D = X_2^{\lfloor D/2\rfloor}
P_{\lfloor D/2\rfloor}(X_1,X_2^{-1})$ belongs to $\bZ[X_1,X_2]$,
has degree at most $D$, norm at most $e^{\lfloor D/2\rfloor^\beta}
\le e^{D^\beta}$, and satisfies
\[
 |\tP_D(\xi+ir, \eta^{-1}s^{-i})|
  \le (|\eta|^{-1}|s|^{-i})^{\lfloor D/2\rfloor} e^{-\lfloor D/2\rfloor^\nu}
  \le e^{-D^{\nu-\epsilon}}
  \quad
  (0\le i < 4 \lfloor D^{\sigma}\rfloor).
\]
Since $|s^{-1}|>1$, we conclude
that $\xi$ and $\eta^{-1}$ are algebraic over $\bQ$. So,
$\eta$ is algebraic as well.
\end{proof}

For the proof of the theorem we need to work, as usual,
in a projective setting.  For this reason, we set
\[
 \ugamma _i = (1, \xi + ir, \eta s^i) \in \bC^3
 \et
 \gamma _i = (1: \xi + ir : \eta s^i) \in \bP^2(\bC)
 \quad
 (i\in\bZ)
\]
so that, for each $i$, $\ugamma_i$ is a representative of
$\gamma _i$.  We also denote by $\utau\colon\bC^3\to\bC^3$
the linear map given by
\[
 \utau(x, y, z)=(x, y + r x, sz)
 \quad \text{for each \ } (x,y,z)\in\bC^3,
\]
and by $\tau\colon\bP^2(\bC)\to\bP^2(\bC)$ the corresponding
automorphism of\/ $\bP^2(\bC)$, so that $\utau(\ugamma_i)
= \ugamma_{i+1}$ and $\tau(\gamma_i)=\gamma_{i+1}$ for each
$i\in\bZ$.  Viewing $\bC \times \bC^*$ as a subset of
$\bP^2(\bC)$ under the standard embedding, mapping $(y,z)$
to $(1:y:z)$, the map $\tau$ restricts to translation by
$(r,s)$ in the group $\bC \times \bC^*$.

We also introduce a third variable $X_0$ and set
$\uX=(X_0,X_1,X_2)$ for short.  For each subring $A$ of
$\bC$, we view $A[\uX]$ as an $\bN$-graded ring for the degree
and, for each $D\in\bN$, we denote by $A[\uX]_D$ its homogeneous
part of degree $D$.  We also denote by $\Phi$ the $\bC$-algebra
automorphism of $\bC[\uX]$, homogeneous of degree $0$, given by
\[
 \Phi(P(\uX)) = P(X_0,X_1+rX_0,sX_2) \quad (P \in \bC[\uX]).
\]
In the proof of our theorem, it will play the role of the
derivation $\cD$ of \cite{R2012}.  It satisfies
\[
 \Phi^j(P)(\utau^i(\uz)) = \Phi^{i+j}(P)(\uz)
 \quad (i,j\in\bZ,\, \uz \in \bC^3).
\]
The next result essentially reformulates, in the above
setting, the hypotheses of Theorem \ref{intro:thm}.

\begin{prop}
 \label{not:prop}
Let the notation and hypotheses be as in
Theorem \ref{intro:thm} (with $|s|>1$).  Then, for each
sufficiently large positive integer $D$, there exists
a homogeneous polynomial $\tP_D\in\bZ[\uX]$ of degree $D$,
not divisible by $X_0$ nor by $X_2$, such that
\[
 \Phi^i\tP_D\in\bZ[\uX],\quad
 \|\Phi^i\tP_D\|\le e^{2D^\beta},\quad
 |\tP_D(\ugamma_i)| \le e^{-(1/2)D^\nu}\quad
 (0\le i <4\lfloor D^\sigma\rfloor).
\]
\end{prop}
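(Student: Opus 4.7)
The plan is to construct $\tP_D$ by a short sequence of routine modifications of $P_D$, and then to verify the three estimates by elementary bookkeeping. First, write $r=a/b$ and $s=p/q$ in lowest terms with $b,q\ge 1$, let $d=\deg P_D$, and let $e\ge 0$ be the largest integer such that $X_2^e\mid P_D$, so that $P_D=X_2^e Q_D$ with $Q_D\in\bZ[X_1,X_2]$ of degree $d-e$, not divisible by $X_2$, and with $\|Q_D\|=\|P_D\|$. Setting $R_D:=X_1^{D-d+e}Q_D\in\bZ[X_1,X_2]$ lifts the total degree back to exactly $D$ without introducing $X_2$-divisibility, and its homogenization
\[
\tR_D(\uX):=X_0^D R_D(X_1/X_0,X_2/X_0)\in\bZ[\uX]_D
\]
is then divisible by neither $X_0$ (as $\deg R_D=D$) nor $X_2$ (as $R_D(X_1,0)\ne 0$). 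The proposed polynomial is
\[
\tP_D:=C\,\tR_D,\qquad C:=b^D\,q^{4D^{\sigma+1}}\in\bZ_{>0}.
\]

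The heart of the verification is a single denominator estimate. Since $\Phi$ fixes $X_0$ and sends $X_1\mapsto X_1+rX_0$ and $X_2\mapsto sX_2$, applying $\Phi^i$ to a monomial $X_0^{\alpha_0}X_1^{\alpha_1}X_2^{\alpha_2}$ of degree at most $D$ yields, after expansion, coefficients with denominators dividing $b^{\alpha_1}q^{i\alpha_2}$, hence dividing $b^Dq^{iD}\mid C$ for every $i$ in the range $0\le i<4\lfloor D^\sigma\rfloor$; this forces $\Phi^i\tP_D\in\bZ[\uX]$. Bounding norms by sums of absolute values after expansion yields
\[
\|\Phi^i\tR_D\|\le\binom{D+2}{2}\,\|P_D\|\,(1+|ir|)^D\,|s|^{iD}\le e^{D^\beta+O(D^{\sigma+1})+O(D\log D)},
\]
so $\|\Phi^i\tP_D\|=C\|\Phi^i\tR_D\|\le e^{2D^\beta}$ for $D$ large, because the assumption $\beta>\sigma+1$ absorbs both $\log C=O(D^{\sigma+1})$ and $\log|s|^{iD}=O(D^{\sigma+1})$.

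For the value estimate, evaluation at $\ugamma_i=(1,\xi+ir,\eta s^i)$ gives $\tR_D(\ugamma_i)=(\xi+ir)^{D-d+e}P_D(\xi+ir,\eta s^i)/(\eta s^i)^e$; using $|s|>1$ to bound $|\eta s^i|^{-e}\le\max(1,|\eta|^{-1})^D=e^{O(D)}$, combined with $|\xi+ir|^D\le(|\xi|+4|r|D^\sigma)^D=e^{O(D\log D)}$ and the hypothesis $|P_D(\xi+ir,\eta s^i)|\le e^{-D^\nu}$, one obtains
\[
|\tP_D(\ugamma_i)|\le C\cdot e^{O(D\log D)+O(D)-D^\nu}\le e^{-(1/2)D^\nu}
\]
for $D$ large, since $\beta>\sigma+1$ and $\nu>2+\beta-\sigma$ force $\nu>3>\sigma+1$ (using $\sigma<2$), which absorbs the $O(D^{\sigma+1})$ coming from $C$. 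There is no serious obstacle here: the only delicate point is to pick $C$ large enough to clear the worst-case denominator $b^Dq^{iD}$ uniformly for $0\le i<4\lfloor D^\sigma\rfloor$ while keeping $\log C=O(D^{\sigma+1})$ small compared to both $D^\beta$ and $D^\nu$, and the gaps $\beta>\sigma+1$ and $\nu>\sigma+1$ make this possible.
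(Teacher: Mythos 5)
Your construction is essentially identical to the paper's: both set $\tP_D$ to be the homogenization of a constant times $X_1^{D-\deg P_D + b}\cdot X_2^{-b}P_D$ (with $b$ the $X_2$-adic valuation of $P_D$), with the constant chosen to clear the denominators that $\Phi^i$ can introduce, and both verify the norm and value bounds via the absorbing inequalities $\beta>\sigma+1$ and $\nu>\sigma+1$. The only cosmetic slip is that $q^{4D^{\sigma+1}}$ need not be an integer since $4D^{\sigma+1}$ need not be; replace the exponent by $4D\lfloor D^\sigma\rfloor$ (as the paper does) or $\lceil 4D^{\sigma+1}\rceil$ and everything goes through unchanged.
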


\begin{proof}
Let $m\in\bN^*$ be a common denominator of $r$ and $s$.
Similarly as in \cite[\S7, Step~1]{R2012}, it suffices to
choose for $\tP_D$ the homogeneous polynomial of $\bZ[\uX]$
of degree $D$ such that $\tP_D(1,X_1,X_2) =
m^{4 D \lfloor D^\sigma\rfloor} X_1^aX_2^{-b} P_D(X_1, X_2)$
where $b$ is the largest integer such that $X_2^b$
divides $P_D$ and $a=D-\deg(P_D)+b$.
\end{proof}

Identifying $\bigwedge^2\bC^3$ with $\bC^3$ as usual and denoting by
$\|\uz\|$ the maximum norm of a point $\uz$ in $\bC^3$, we define
the {\it distance} between two points $\alpha, \beta \in
\bP^2(\bC)$ by
\[
 \dist(\alpha, \beta)
 = \frac{\|\ualpha \wedge \ubeta\|}{\|\ualpha\| \,  \|\ubeta\|},
\]
where $\ualpha$ and $\ubeta$ are representatives of $\alpha$ and $\beta$
in $\bC^3$.  This is not properly a distance function on $\bP^2(\bC)$
but it satisfies the following weak triangle inequality
\[
 \dist(\alpha,\gamma)\le 2\dist(\alpha,\beta)+\dist(\beta,\gamma)
 \quad
 (\alpha,\beta,\gamma\in\bP^2(\bC)).
\]
More generally, for any $\alpha\in\bP^2(\bC)$ and any finite
subset $\cS$ of\/ $\bP^2(\bC)$, we define the distance from $\alpha$
to $\cS$, denoted $\dist(\alpha,\cS)$, as the smallest distance
between $\alpha$ and a point of $\cS$.  We conclude this section with
two simple facts.

\begin{lemma}
 \label{not:lemma2}
There exists a constant $c_1=c_1(r,s) > 0$ such that
\[
 |\log\,\dist(\tau^j(\alpha),\tau^j(\beta))
   - \log\,\dist(\alpha,\beta) |
 \le c_1 |j|
\]
for any distinct points $\alpha,\beta\in\bP^2(\bC)$
and any $j\in\bZ$.
\end{lemma}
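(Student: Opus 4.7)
The plan is to lift everything to $\bC^3$, where $\tau$ is induced by the linear automorphism $\utau$. Concretely, I would pick representatives $\ualpha,\ubeta\in\bC^3$ of $\alpha,\beta$; then $\utau^j\ualpha$ and $\utau^j\ubeta$ represent $\tau^j(\alpha)$ and $\tau^j(\beta)$, while $\utau^j\ualpha\wedge\utau^j\ubeta=(\tbigwedge^2\utau^j)(\ualpha\wedge\ubeta)$ represents their wedge. Substituting into the definition of $\dist$ yields the identity
\[
\frac{\dist(\tau^j(\alpha),\tau^j(\beta))}{\dist(\alpha,\beta)}
=\frac{\|(\tbigwedge^2\utau^j)(\ualpha\wedge\ubeta)\|}{\|\ualpha\wedge\ubeta\|}
\cdot\frac{\|\ualpha\|\,\|\ubeta\|}{\|\utau^j\ualpha\|\,\|\utau^j\ubeta\|},
\]
which I would estimate factor by factor.

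For any invertible linear map $T$ and any non-zero vector $v$ one has $\|T^{-1}\|_{\mathrm{op}}^{-1}\le\|Tv\|/\|v\|\le\|T\|_{\mathrm{op}}$. Applying this to $T=\utau^j$ in the denominator of the second factor, and to $T=\tbigwedge^2\utau^j$ in the first factor, I would bound the ratio above and its reciprocal by products of operator norms of $\utau^{\pm j}$ and $\tbigwedge^2\utau^{\pm j}$. Crucially, these bounds no longer depend on the choice of $\ualpha,\ubeta$.

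It remains to estimate these operator norms explicitly. The matrix of $\utau^j$ is
\[
\begin{pmatrix} 1 & 0 & 0\\ jr & 1 & 0\\ 0 & 0 & s^j\end{pmatrix},
\]
and $\utau^{-j}$ has the analogous shape with $j$ replaced by $-j$. Hence $\|\utau^{\pm j}\|_{\mathrm{op}}$ is dominated by a quantity of the form $C(1+|j|)|s|^{|j|}$, and $\|\tbigwedge^2\utau^{\pm j}\|_{\mathrm{op}}$ by its square. Taking logarithms turns these into linear bounds in $|j|$, giving the inequality with an explicit constant $c_1=c_1(r,s)$.

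There is no genuine obstacle: the argument is essentially an unpacking of the definition of $\dist$ combined with the elementary observation that $j\mapsto\utau^j$ grows at most geometrically in $|j|$. The one point one has to be careful about is that the estimate must be uniform in the representatives $\ualpha,\ubeta$, which is precisely why passing through operator norms (rather than arguing pointwise) is essential.
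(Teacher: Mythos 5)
Your proof is correct, and it takes a somewhat different route from the paper. The paper's proof is shorter: it observes that since $\utau$ and $\tbigwedge^2\utau$ are fixed invertible linear maps, there is a single constant $c=c(r,s)>1$ with $c^{-1}\|\uu\|\le\|\utau(\uu)\|\le c\|\uu\|$ (and likewise for $\tbigwedge^2\utau$), derives $\dist(\alpha,\beta)/\dist(\tau(\alpha),\tau(\beta))\in[c^{-3},c^3]$, and then handles general $j\ge 0$ by induction (and $j<0$ by relabelling). You instead estimate $\|\utau^{\pm j}\|_{\mathrm{op}}$ and $\|\tbigwedge^2\utau^{\pm j}\|_{\mathrm{op}}$ directly from the explicit formula $\utau^j(x,y,z)=(x,\,y+jrx,\,s^jz)$, obtaining a bound of the shape $C(1+|j|)|s|^{|j|}$ whose logarithm is $O(|j|)$. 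Both approaches are sound and both correctly factor the distance ratio and pass through operator norms to remove dependence on representatives, which is the key point. The paper's induction avoids any computation of $\utau^j$; your version is more explicit and would even give a sharper asymptotic constant $c_1\approx 3\log|s|$ as $|j|\to\infty$, but at the cost of writing out the matrix of $\utau^j$. Either is fine.
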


\begin{proof}
Since $\utau$ and $\bigwedge^2\utau$ are invertible linear maps,
there exists a constant $c=c(r,s)> 1$ such that
\[
 c^{-1}\|\ualpha\| \le \|\utau(\ualpha)\| \le c\|\ualpha\|
 \et
 c^{-1}\|\uomega\| \le \big\|\tbigwedge^2\utau(\uomega)\big\|
 \le c\|\uomega\|
\]
for any $\ualpha\in\bC^3$ and $\uomega\in\bigwedge^2\bC^3$.  Thus,
for any distinct $\alpha,\beta\in\bP^2(\bC)$ with representatives
$\ualpha,\ubeta\in\bC^3$, we find
\[
 \frac{\dist(\alpha,\beta)}{\dist(\tau(\alpha),\tau(\beta))}
 = \frac{\|\utau(\ualpha)\|}{\|\ualpha\|}
   \frac{\|\utau(\ubeta)\|}{\|\ubeta\|}
   \frac{\|\ualpha\wedge\ubeta)\|}
        {\|\tbigwedge^2 \utau(\ualpha\wedge\ubeta)\|}
 \in [c^{-3},\, c^3].
\]
Therefore, the estimate of the lemma holds for $j=1$ with
$c_1=3\log(c)$.  By induction on $j$, it holds for any $j\ge 0$.
Thus it also holds for $j<0$ (upon replacing $\alpha$ and
$\beta$ by their images under $\tau^{-j}$).
\end{proof}

\begin{lemma}
 \label{not:lemma3}
Let $\alpha,\beta\in\bP^2(\bC)$ with representatives
$\ualpha,\ubeta\in\bC^3$ of norm $1$, let $D\in\bN$ and let
$P \in \bC[\uX]_D$.  Then, we have
$|P(\ualpha)| \le |P(\ubeta)| + D \cL(P)\,\dist(\alpha, \beta)$.
\end{lemma}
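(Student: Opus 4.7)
The plan is to exploit the homogeneity of $P$ to replace $\ubeta$ by a scalar multiple $\nu\ubeta$ that is much closer to $\ualpha$ in the max norm than $\ubeta$ itself, while keeping $\|\nu\ubeta\|\le 1$, and then apply a mean value estimate on the unit ball.  Writing $\ualpha=(a_0,a_1,a_2)$ and $\ubeta=(b_0,b_1,b_2)$, I will fix an index $k$ with $|b_k|=\|\ubeta\|=1$ and set $\nu:=a_k/b_k$, so that $|\nu|=|a_k|\le 1$.  By construction the $k$-th coordinate of $\ualpha-\nu\ubeta$ vanishes, while for $j\ne k$ one has $|a_j-\nu b_j|=|a_jb_k-a_kb_j|$, the absolute value of a coordinate of $\ualpha\wedge\ubeta$.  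Since $\|\ualpha\|=\|\ubeta\|=1$, this gives $\|\ualpha-\nu\ubeta\|\le \|\ualpha\wedge\ubeta\|=\dist(\alpha,\beta)$.

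Next, I will apply the fundamental theorem of calculus along the segment from $\nu\ubeta$ to $\ualpha$, which lies inside the max-norm unit ball by convexity.  From the identity
\[
 P(\ualpha)-P(\nu\ubeta)
 =\int_0^1\sum_{i=0}^2\frac{\partial P}{\partial X_i}\bigl(\nu\ubeta+t(\ualpha-\nu\ubeta)\bigr)\,(a_i-\nu b_i)\,dt,
\]
bounding each partial by $\cL(\partial P/\partial X_i)$ on the unit ball, and observing monomial-by-monomial that $\sum_i\cL(\partial P/\partial X_i)\le D\cL(P)$ (since $\sum_i m_i=|\um|\le D$), one obtains $|P(\ualpha)-P(\nu\ubeta)|\le D\cL(P)\,\dist(\alpha,\beta)$.

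Finally, homogeneity yields $P(\nu\ubeta)=\nu^D P(\ubeta)$, so $|P(\nu\ubeta)|\le |P(\ubeta)|$ thanks to $|\nu|\le 1$, and the triangle inequality closes the argument.  The main subtle point---not really a serious obstacle but the reason the bound comes out clean---is the asymmetry in the construction: one must scale $\ubeta$ rather than $\ualpha$, since the max norm does not let us simultaneously arrange $|\nu|=1$ and $\|\nu\ualpha-\ubeta\|\le\dist(\alpha,\beta)$.  Scaling $\ubeta$ with $|\nu|\le 1$ avoids dividing the error by a possibly small quantity, which would otherwise degrade the inequality by a multiplicative constant.
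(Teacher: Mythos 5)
Your proof is correct, and it takes a genuinely different route from the paper's. The paper simply invokes the well-known projective estimate $|P(\ualpha)Q(\ubeta)-P(\ubeta)Q(\ualpha)|\le D\,\cL(P)\cL(Q)\,\dist(\alpha,\beta)$, specializes it to $Q=X_k^D$ with $k$ chosen so that $|Q(\ubeta)|=1$ (hence $\cL(Q)=1$ and $|Q(\ualpha)|\le 1$), and rearranges. You instead rescale $\ubeta$ to the specific multiple $\nu\ubeta$ that agrees with $\ualpha$ in the $k$-th coordinate, note that the remaining differences $a_j-\nu b_j$ are, up to sign, coordinates of $\ualpha\wedge\ubeta$ (so $\|\ualpha-\nu\ubeta\|\le\dist(\alpha,\beta)$), run the fundamental theorem of calculus along the segment inside the max-norm unit ball using $\sum_i\cL(\partial P/\partial X_i)\le D\cL(P)$, and finally use homogeneity and $|\nu|\le1$ to pass from $P(\nu\ubeta)$ to $P(\ubeta)$. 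In effect you have re-derived the cited inequality for the special case $Q=X_k^D$, since your conclusion $|P(\ualpha)-\nu^D P(\ubeta)|\le D\cL(P)\dist(\alpha,\beta)$ is exactly $|P(\ualpha)b_k^D-P(\ubeta)a_k^D|\le D\cL(P)\dist(\alpha,\beta)$ after multiplying by $|b_k|^D=1$. Your version is self-contained and makes the mechanism transparent; the paper's is shorter but leans on an external fact. Your remark about the asymmetry (scaling $\ubeta$ rather than $\ualpha$, so that $|\nu|\le1$ and no division by a small quantity is needed) is exactly the point that makes the constant come out as $D\cL(P)$ rather than something worse.
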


Here, $\cL(P)$ stands for the \emph{length} of $P$, namely
the sum of the absolute values of the coefficients of $P$.

\begin{proof}
It is well known that
\[
 |P(\ualpha)Q(\ubeta) - P(\ubeta)Q(\ualpha)|
  \le  D \cL(P)\cL(Q)\,\dist(\alpha, \beta)
\]
for any $Q\in\bC[\uX]_D$.  We apply this to $Q=X_i^D$ with
$i\in\{0,1,2\}$ chosen so that $|Q(\ubeta)|=1$.  Then, the
result follows because $\cL(Q)=1$ and $|Q(\ualpha)|\le 1$.
\end{proof}

%%%%%%%%%%%%%%%%%%%%%%%%%%%%%%%%
%%%%%%%%%%%%%%%%%%%%%%%%%%%%%%%%%

\section{Interpolation estimate} \label{sec:interp}

We start with the following improvement of Malher's useful
formula (7) from page 88 of \cite{Mah}, also stated
as Lemma 2 in \cite{Ti}, which we phrase in terms
of linear recurrence sequences using the notation
\[
 i^{(0)}=1,\qquad
 i^{(\mu)} = i(i-1) \cdots (i-\mu+1)
  \quad (i\in\bN,\mu\in\bN^*),
\]
and the convention that the empty product is $1$, in particular
$z^0=1$ for any $z\in\bC$.

\begin{lemma}
  \label{interp:lemma1}
Consider the linear recurrence sequence $u=(u_i)_{i\in\bN}$
given by
\[
 u_i = \sum_{\nu=0}^{n-1}\sum_{\mu=0}^{m_{\nu}-1}
          A_{\mu,\nu} i^{(\mu)} \alpha_\nu^{i-\mu}
 \quad (i\in\bN),
\]
for fixed $n\in\bN^*$, $\alpha_\nu\in\bC$, $m_\nu\in\bN^*$
and $A_{\mu,\nu}\in\bC$, with $\alpha_0,\dots,\alpha_{n-1}$
distinct.  Set
\[
 \begin{aligned}
 M&= \sum_{\nu=0}^{n-1} m_\nu,
 &a_0 &= \left(\max_{0\le \nu<n} \binom{M}{m_\nu}\right)
        \prod_{\nu =0}^{n-1} (1+ |\alpha_\nu|)^{m_\nu}, \\
 a_1 &= \min_{0 \le \nu <n}
       \prod_{\substack{0\le \nu'<n \\ \nu' \neq \nu}}
           |\alpha_{\nu'} - \alpha_{\nu}|^{m_{\nu'}},
 &a_2 &= \min_{\substack{0\le \nu,\nu'<n \\ \nu \neq \nu'}}
           \left
           \{1,|\alpha_{\nu'} - \alpha_{\nu}|^{m_{\nu}}
           \right\},
 \end{aligned}
\]
with the understanding that $a_1=a_2=1$ if $n=1$.
Then, we have
\[
 \max_{\substack{0\le\nu<n \\ 0\le\mu<m_\nu}}|A_{\mu,\nu}|
   \le \frac{a_0}{a_1 a_2} \max_{0\le i < M}|u_i|.
\]
\end{lemma}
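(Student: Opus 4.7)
The plan is to invert the confluent Vandermonde system relating $(A_{\mu,\nu})$ to $(u_0,\dots,u_{M-1})$ via a generating-function and partial-fractions argument, with careful bookkeeping of the constants.

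First, I would encode $u$ by the formal power series $U(z) = \sum_{i\ge 0} u_i z^i$ and use the elementary identity
\[
 \sum_{i\ge 0} i^{(\mu)}\alpha^{i-\mu} z^i = \frac{\mu!\, z^\mu}{(1-\alpha z)^{\mu+1}}
\]
to rewrite the hypothesis as $U(z) = P(z)/Q(z)$, where $Q(z) = \prod_\nu (1-\alpha_\nu z)^{m_\nu}$ has constant term $1$ and $P(z) = Q(z) U(z)$ is a polynomial of degree at most $M-1$. The coefficients of $P$ thus depend only on $u_0,\dots,u_{M-1}$, and each is bounded in absolute value by $\cL(Q)\max_{i<M}|u_i| \le \prod_\nu(1+|\alpha_\nu|)^{m_\nu}\max_{i<M}|u_i|$; this accounts for the product $\prod_\nu(1+|\alpha_\nu|)^{m_\nu}$ appearing in $a_0$.

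Next, to recover a given $A_{\mu,\nu}$, I would fix $\nu$, set $R_\nu(z) := \prod_{\nu'\neq\nu}(1-\alpha_{\nu'}z)^{m_{\nu'}}$, and derive by clearing denominators in the formula for $U$ the congruence
\[
 \frac{P(z)}{R_\nu(z)} = (1-\alpha_\nu z)^{m_\nu} U(z) \equiv \sum_{\mu=0}^{m_\nu-1} A_{\mu,\nu}\,\mu!\,z^\mu(1-\alpha_\nu z)^{m_\nu-1-\mu} \pmod{(1-\alpha_\nu z)^{m_\nu}}.
\]
In the variable $w = 1-\alpha_\nu z$, the right-hand side is a polynomial of degree $m_\nu - 1$ whose coefficients in the mixed basis $\{z^\mu w^{m_\nu-1-\mu}\}$ are precisely the $A_{\mu,\nu}\mu!$. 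Hence each $A_{\mu,\nu}$ can be read off from the Taylor expansion of $P(z)/R_\nu(z)$ in powers of $w$ up to order $m_\nu - 1$, yielding an explicit linear formula for $A_{\mu,\nu}$ in terms of the coefficients of $P$, and therefore of $u_0,\dots,u_{M-1}$.

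The remaining work is to bound the relevant Taylor coefficients of $R_\nu(z)^{-1}$ at $z = 1/\alpha_\nu$. The value $R_\nu(1/\alpha_\nu)^{-1}$ has absolute value $|\alpha_\nu|^{M-m_\nu}/\prod_{\nu'\neq\nu}|\alpha_{\nu'}-\alpha_\nu|^{m_{\nu'}}\le |\alpha_\nu|^{M-m_\nu}/a_1$, with the $|\alpha_\nu|^{M-m_\nu}$ factor absorbed into $\prod_\nu(1+|\alpha_\nu|)^{m_\nu}$. The higher-order Taylor coefficients of $R_\nu^{-1}$ are controlled by the nearest pole at $z = 1/\alpha_{\nu'}$, and a Cauchy-type estimate produces the factor $1/a_2$ precisely when some $|\alpha_{\nu'}-\alpha_\nu|$ is small (the truncation at $1$ in $a_2$ reflecting the fact that no correction is needed when all the roots are well separated). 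The binomial coefficient $\binom{M}{m_\nu}$ in $a_0$ then enters through the combinatorics of expanding $R_\nu^{-1}$ up to order $m_\nu - 1$ in $w$, counting the multinomial terms involved.

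The main obstacle is keeping these constants sharp — in particular, achieving the binomial $\binom{M}{m_\nu}$ rather than the cruder $2^M$ or $\binom{M}{m_\nu}^2$ that a naive triangle-inequality extraction would give. This is presumably the exact improvement over Mahler's formula (7) of \cite{Mah} advertised in the introduction; the decisive point is to extract the $A_{\mu,\nu}$ directly in the mixed basis $\{z^\mu w^{m_\nu-1-\mu}\}$, which bypasses the factorials produced by Leibniz-rule differentiation in a classical Hermite-interpolation approach.
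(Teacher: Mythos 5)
The overall strategy you propose---encode the recurrence by the rational generating function $U(z)=P(z)/Q(z)$ with $Q(z)=\prod_\nu(1-\alpha_\nu z)^{m_\nu}$ and $\deg P<M$, then extract $A_{\mu,\nu}$ by a localised expansion at $w=1-\alpha_\nu z=0$---is the right circle of ideas; it is essentially the ``primal'' version of what the paper does. The paper works on the dual side: it treats $\tau$ as the shift operator on sequences, constructs explicitly a polynomial $b(X)=a(X-\alpha_\nu)c(X)$ of degree $<M$ with $A_{\mu,\nu}=(b(\tau)u)_0$, and then bounds $\cL(b)$ in one step; the sharp binomial comes from \cite[Lemma 3.2]{R2012}, which controls the length of the Hensel-lifting factor $a(X)$.

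The gap in your plan is in the constant bookkeeping, and it is not a cosmetic one. You split the estimate into two steps: first $\|P\|\le\cL(Q)\max_{i<M}|u_i|=\prod_\nu(1+|\alpha_\nu|)^{m_\nu}\max|u_i|$, declaring that this ``accounts for'' the factor $\prod_\nu(1+|\alpha_\nu|)^{m_\nu}$ in $a_0$, and then an extraction $P\mapsto A_{\mu,\nu}$ which you hope has operator norm at most $\binom{M}{m_\nu}/(a_1a_2)$. Already the simplest case $n=1$, $m_0=M=2$, $\alpha_0=1$ defeats this. There $a_1=a_2=1$ and $\binom{M}{m_0}=1$, so the extraction step would need operator norm $\le 1$; but $P(z)=A_0(1-z)+A_1z$, so $A_1=P(1)=p_0+p_1$ has norm $2$ with respect to $\|P\|$, and the two-step bound gives $|A_1|\le 2\cdot 4\max|u_i|=8\max|u_i|$, strictly worse than the claimed $a_0\max|u_i|=4\max|u_i|$. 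The reason the one-step approach wins is cancellation: composing the two maps, $A_1=u_1-u_0$, which has length $2$ as a functional of $u$, whereas the intermediate bound on $\|P\|$ throws that cancellation away. This example also shows that your attribution of the binomial factor to ``the combinatorics of expanding $R_\nu^{-1}$'' is misplaced: here $R_\nu\equiv 1$, yet the change from the standard basis $\{w^j\}$ to the mixed basis $\{z^\mu w^{m_\nu-1-\mu}\}$ is a genuine triangular system whose inverse has norm $>1$. Two further inaccuracies compound this: the poles of $R_\nu(z)^{-1}$ sit at $z=1/\alpha_{\nu'}$, not at $\alpha_{\nu'}$, so a Cauchy estimate produces distances $|1/\alpha_{\nu'}-1/\alpha_\nu|$ rather than $|\alpha_{\nu'}-\alpha_\nu|$ and would bring in uncontrolled $|\alpha|$-factors; and the claim that $|\alpha_\nu|^{M-m_\nu}=\prod_{\nu'\neq\nu}|\alpha_\nu|^{m_{\nu'}}$ is ``absorbed into $\prod_\nu(1+|\alpha_\nu|)^{m_\nu}$'' compares powers of $|\alpha_\nu|$ against powers of the \emph{other} $(1+|\alpha_{\nu'}|)$'s, which is not justified. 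To salvage the generating-function route you would need to track the composed functional $u\mapsto A_{\mu,\nu}$ all at once rather than via $\|P\|$, at which point you are essentially rebuilding the polynomial $b(X)$ of the paper's proof.
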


The  connection with Mahler's formula is that $u_i=E^{(i)}(0)$
where $E$ is the exponential polynomial given by
$E(z)=\sum_{\mu,\nu}A_{\mu,\nu}z^\mu e^{\alpha_{\nu} z}$
($z\in\bC$).  The above result improves on Mahler's
because $a_0\le (4a)^M$ where $a=\max_\nu\{1,|\alpha_\nu|\}$,
while the inequality of Malher has $a_0$ replaced by $2(6a)^M$.
As we will see below, this improvement allows us to gain an
order of magnitude for the application that we have in view.
The proof of the lemma follows that of \cite[Prop.~3.3]{R2012}.

\begin{proof}
We have $u=\sum_{\mu,\nu}A_{\mu,\nu} u^{(\mu,\nu)}$ with sequences
$u^{(\mu,\nu)}=(u^{(\mu,\nu)}_i)_{i\in\bN}$ given by
\[
 u_i^{(\mu,\nu)}= i^{(\mu)} \alpha_{\nu}^{i-\mu}
 \quad (i \in \bN,\, 0\le \nu<n,\, 0\le \mu< m_\nu).
\]
Let $\tau $ denote the linear operator on $\bC^{\bN}$ which
sends a sequence $(x_i)_{i \in \bN}$
to the shifted sequence $(x_{i+1})_{i \in \bN}$.  We first
note that, for each $(\mu,\nu)$ with $0\le \nu<n$ and
$0\le \mu<m_\nu$, we have
\begin{equation*}
 \label{interp:lemma1:eq1}
 (\tau-\alpha_{\nu})^m  u^{(\mu,\nu)} = \mu^{(m)} u^{(\mu-m,\nu)}
 \quad (0\le m\le \mu),
\end{equation*}
with initial term
$\left((\tau-\alpha_{\nu})^m  u^{(\mu,\nu)}\right)_0
  = \mu! \delta_{m,\mu}$.
Moreover, $(\tau-\alpha_{\nu})^{m_\nu}$ annihilates
$u^{(\mu,\nu)}$.  Now, fix a choice of $(\mu,\nu)$ as above,
and form
\[
 c(X)= \frac{1}{\mu!}(X-\alpha_{\nu})^{\mu}
       \prod_{\substack{0\le\nu'<n \\ \nu' \neq \nu}}
          \left(
            \frac{X-\alpha_{\nu'}}{\alpha_{\nu} -\alpha_{\nu'}}
          \right)^{m_{\nu'}}.
\]
By \cite[Lemma 3.2]{R2012}, there exists a unique polynomial
$a(X)\in\bC[X]$ of degree at most $m_\nu -\mu -1$ such that
\[
 a(X) \prod_{\substack{0\le\nu'<n \\ \nu' \neq \nu}}
      \left( 1-\frac{X}{\alpha_{\nu'} -\alpha_{\nu}} \right)^{m_{\nu'}}
 \equiv 1
 \mod X^{m_\nu-\mu}
\]
and its length satisfies
\[
 \cL(a)
 \le \binom{M-\mu-1}{m_\nu-\mu-1}
     \max_{\substack{0\le\nu'<n \\ \nu' \neq \nu}}
        \left\{ 1, \, \frac{1}{|\alpha_{\nu'} - \alpha_{\nu}|}
        \right\}^{m_\nu-\mu-1}
 \le \binom{M}{m_\nu} \frac{1}{a_2}.
\]
Then $b(X)=a(X-\alpha_\nu)c(X)$ is a polynomial of degree
at most $M-1$ which is divisible by $(X-\alpha_{\nu'})^{m_{\nu'}}$
for each $\nu'=0,\dots,n-1$ with $\nu'\neq \nu$, and which is
congruent to $(\mu!)^{-1}(X-\alpha_\nu)^\mu$ modulo
$(X-\alpha_{\nu})^{m_{\nu}}$.  In view of the above, this implies
that $A_{\mu,\nu}=(b(\tau)u)_0$ and so
\[
 |A_{\mu,\nu}|
   \le \cL(b)\max_{0\le i<M} |u_i|.
\]
The conclusion follows because
$\cL(b)\le \cL(a)(1+|\alpha_\nu|)^{m_\nu-\mu-1}\cL(c)
\le a_0/(a_1a_2)$, and because the choice of $(\mu,\nu)$ is
arbitrary.
\end{proof}

With the help of the above result, one easily recovers
Proposition 3.3 of \cite{R2012} up to the values of the
constants.  In our context, it has the following consequence.

\begin{prop}
 \label{interp:prop1}
Let $L \in \bN$ and $M = \binom{L+2}{2}$.  Then the map
\begin{align*}
  \bC[\uX]_L &\longrightarrow \bC^M \\
  Q \quad & \longmapsto (Q(\ugamma_i))_{0\le i<M}
\end{align*}
is an isomorphism of $\bC$-vector spaces.  Moreover, there
exists a constant $c_2 \ge 3$ depending only on $r,s,\xi,\eta$
such that any $Q \in \bC[\uX]_L$ satisfies
\begin{equation}
 \label{interp:prop1:eq1}
  \cL(Q) \le (c_2L)^{3L} \max_{0\le i<M} |Q(\ugamma_i)| .
\end{equation}
\end{prop}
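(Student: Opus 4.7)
The plan is to realize $Q(\ugamma_i)$ as an exponential polynomial in $i$ and apply Lemma \ref{interp:lemma1}. Expanding $Q = \sum_{a+b+c=L} q_{a,b,c} X_0^a X_1^b X_2^c$ and substituting $\ugamma_i = (1,\xi+ir,\eta s^i)$, one obtains the decomposition
\[
  Q(\ugamma_i) = \sum_{c=0}^{L} P_c(i)\,(s^c)^i,
  \qquad
  P_c(t) = \eta^c \sum_{b=0}^{L-c} q_{L-b-c,b,c}\,(\xi+rt)^b.
\]
Since $r \ne 0$ and $\eta \ne 0$, the polynomials $(\xi+rt)^b$ form a basis of $\bC[t]_{\le L-c}$, and the map $Q \mapsto (P_0,\ldots,P_L)$ is a $\bC$-linear bijection onto $\bigoplus_c \bC[t]_{\le L-c}$, of total dimension $M$. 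Rewriting each $P_c$ in the falling-factorial basis $\{t^{(\mu)}\}$ puts $u_i := Q(\ugamma_i)$ in the exact form considered in Lemma \ref{interp:lemma1}, with $n=L+1$, $\alpha_c = s^c$, and $m_c = L-c+1$. The standing hypothesis $|s| > 1$ (inherited from Lemma \ref{not:lemma}) guarantees the $\alpha_c$ are pairwise distinct and nonzero, and $\sum_c m_c = M$ matches the number of evaluations.

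Injectivity of the evaluation map is then immediate: if $u_i = 0$ for $0 \le i < M$, Lemma \ref{interp:lemma1} forces every $A_{\mu,c} = 0$, whence $P_c \equiv 0$ for each $c$, and the basis property of $\{(\xi+rt)^b\}$ gives $q_{a,b,c} = 0$ throughout. Since $\dim_{\bC}\bC[\uX]_L = M$, the evaluation map is a $\bC$-isomorphism. For the quantitative estimate, the same lemma bounds each $|A_{\mu,c}|$ by $(a_0/(a_1 a_2))\max_{0 \le i < M}|u_i|$. Translating from the $A_{\mu,c}$ back to the monomial coefficients $q_{a,b,c}$ requires the change of basis between $\{t^{(\mu)}\}$ and $\{(\xi+rt)^b\}$, whose entries are controlled by unsigned Stirling numbers of the first kind and by powers of $(1+|\xi|/|r|)$; this contributes an additional factor of shape $(c\,L)^L$, and summing the $M = O(L^2)$ individual coefficient bounds gives the stated estimate up to constants.

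The main obstacle is a sharp estimation of $a_0/(a_1 a_2)$: Mahler's original formula would yield $e^{O(L^2 \log|s|)}$, which is too weak to produce $(c_2 L)^{3L}$. The improvement afforded by Lemma \ref{interp:lemma1}, in which $a_0 = \max_\nu \binom{M}{m_\nu} \cdot \prod_\nu (1+|\alpha_\nu|)^{m_\nu}$ replaces $(4a)^M$, is precisely calibrated for the geometric sequence $\alpha_c = s^c$: the dominant contribution $|s|^{L(L+1)(L+2)/6}$ appears with the same exponent inside the numerator $\prod_\nu(1+|s|^\nu)^{m_\nu}$ and inside the denominator $\min_\nu \prod_{\nu' \ne \nu} |s^{\nu'}-s^\nu|^{m_{\nu'}}$, where the minimum is attained at $\nu = 0$, and these leading contributions cancel in the ratio. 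What survives is the binomial factor $\max_\nu \binom{M}{m_\nu} \le (c' L)^{L+1}$ together with a lower-order correction of size $C_s^{O(L)}$ coming from the error terms $\log(1 + |s|^{-\nu})$ and $\log(1 - |s|^{-|\nu'-\nu|})$ and from $a_2$, where $C_s$ depends on $s$ only. Multiplying through, absorbing all dependencies on $r,s,\xi,\eta$ into a single constant $c_2 \ge 3$ delivers $\cL(Q) \le (c_2 L)^{3L} \max_{0 \le i < M}|Q(\ugamma_i)|$.
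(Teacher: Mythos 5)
Your proposal is correct and follows essentially the same path as the paper: realize $Q(\ugamma_i)$ as an exponential polynomial in $i$ with frequencies $s^0,\dots,s^L$ and multiplicities $L+1,\dots,1$, apply Lemma \ref{interp:lemma1}, and observe that the leading geometric contributions in $a_0$ and $a_1$ cancel, leaving a factor of order $(cL)^{L+1}$. The only cosmetic difference is that the paper avoids explicit Stirling-number manipulations by first substituting $Q(\uX)=P(X_0,r^{-1}(X_1-\xi X_0),\eta^{-1}X_2)$ and expanding $P$ directly in the homogenized falling-factorial basis, so the coefficient bound from the lemma controls $\cL(P)$ with one further factor $c^L$ giving $\cL(Q)$.
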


\begin{proof}
We simply prove the second assertion as it implies the
first.  To this end, we fix a polynomial $Q \in \bC[\uX]_L$.
Then there exists $P \in \bC[\uX]_L$ such that
\[
 Q(\uX) = P(X_0,r^{-1}(X_1-\xi X_0),\eta^{-1}X_2).
\]
Writing $P$ in the form
\[
 P(\uX)
  = \sum_{\nu=0}^{L}\sum_{\mu=0}^{L-\nu}
      p_{\mu,\nu} X_0^{L-\mu-\nu}
        X_1(X_1-X_0) \cdots (X_1-(\mu-1)X_0)X_2^{\nu}
\]
with $p_{\mu,\nu} \in \bC$, we find
\[
 \cL(Q) \le c^L\cL(P) \le c^L M L!\,\max|p_{\mu,\nu}|
\]
for some constant $c=c(r,\xi,\eta)>0$, and
\[
 Q(\ugamma_i)
   = P(1,i,s^i)
   =  \sum_{\nu=0}^{L}\sum_{\mu=0}^{L-\nu}
        A_{\mu,\nu} i^{(\mu)} (s^{\nu})^{i-\mu}
   \quad (0\le i),
\]
where $A_{\mu,\nu}=p_{\mu,\nu} s^{\mu\nu}$.  By Lemma
\ref{interp:lemma1}, this gives
\[
 \max |p_{\mu,\nu}|
   \le \max |A_{\mu,\nu}|
   \le \frac{a_0}{a_1 a_2} \max_{0\le i < M}|Q(\ugamma_i)|
\]
where
\begin{align*}
 a_2 &= \min_{\nu' \neq \nu} \{1,\,|s^{\nu'} - s^{\nu}|\}^{L-\nu+1}
      \ge \min\{1,|s|-1\}^{L+1}\\
\noalign{\noindent\text{and}}
 \frac{a_0}{a_1}
   &= \binom{M}{L+1} \max_{0\le \nu\le L}
      (1+|s^{\nu}|)^{L-\nu+1}
      \prod_{\substack{0\le \nu'\le L \\ \nu' \neq \nu}}
        \left(
          \frac{1+|s^{\nu'}|}{|s^{\nu'}-s^{\nu}|}
        \right)^{L-\nu'+1}\\
  &\le (M+1)^{L+1}
      (c')^{L+1} \max_{0\le \nu\le L} |s|^{\nu(L-\nu+1)}
      \prod_{0\le \nu'<\nu} |s|^{(\nu'-\nu)(L-\nu'+1)}\\
  &=(M+1)^{L+1} (c')^{L+1},
\end{align*}
with $\disp c'=\prod_{i=1}^\infty \frac{1+|s|^{-i+1}}{(1-|s|^{-i})^2}$.
The conclusion follows.
\end{proof}

We don't know if the factor $(c_2L)^{3L}$ in \eqref{interp:prop1:eq1}
has optimal order.  However, the formula of Mahler which we
discussed after Lemma \ref{interp:lemma1} yields
instead a multiplier of the order of $c^{L^3}$.  This would
have been sufficient for our purpose, but we think that the
above estimate, whose proof does not require much more work,
is interesting in itself.  In the case where $|s|<1$, the first
author shows in \cite[Lemma 1.5.1]{Ng} that \eqref{interp:prop1:eq1}
holds with $(c_2L)^{3L}$ replaced by $c^{L^3}$ for a constant
$c>1$ and that this is optimal up to the value of that constant
\cite[Example 1.5.2]{Ng}.

\begin{definition}
For each $T\in\bN^*$, we denote by $I^{(T)}$ the homogeneous
ideal of $\bC[\uX]$ generated by the homogeneous
polynomials vanishing on $\{\ugamma_i\,;\,0\le i<T\}$.
For each $D\in\bN$, we denote by $I^{(T)}_D$ the homogeneous
part of $I^{(T)}$ of degree $D$ and, for each $\alpha \in
\bP^2(\bC)$ with representative $\ualpha\in\bC^3$ of maximum
norm $\|\ualpha\|=1$, we define
\[
 |I^{(T)}_D|_{\alpha}
   = \sup \{|P(\ualpha)| \,;\, \ P \in I^{(T)}_D,\, \|P \| \le 1 \},
\]
the right end side being independent of the choice of $\ualpha$.
\end{definition}

The next result is the counterpart of \cite[Prop.~4.5]{R2012}.
Its proof however is much simpler and uses a genuinely different
argument.  It allows us to avoid the condition $\nu>2+\sigma$
which is needed in \cite{Ng}.

\begin{prop}
 \label{interp:prop:dist}
Let $D,T\in\bN^*$ with $T \le \binom{D+1}{2}$, and let
$\alpha\in\bP^2(\bC)$.  Then we have
\[
 \dist\big(\alpha,\{\gamma_0,\dots,\gamma_{T-1}\}\big)
  \le c_3^{T^{3/2}}|I_D^{(T)}|_\alpha
\]
where $c_3\ge 3$ depends only on $r$, $s$, $\xi$, $\eta$.
\end{prop}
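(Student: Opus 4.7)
The plan is to exhibit, for any $\alpha\in\bP^2(\bC)$ with $\|\ualpha\|=1$, an explicit polynomial $P\in I_D^{(T)}$ satisfying $|P(\ualpha)|/\|P\|\ge c_3^{-T^{3/2}}\delta$, where $\delta:=\dist(\alpha,\{\gamma_0,\dots,\gamma_{T-1}\})$. Fix $k\in\{0,1,2\}$ with $|\alpha_k|=1$, and let $L$ be the smallest integer with $T_L:=\binom{L+2}{2}\ge T$; then $L\le D-1$ by the hypothesis $T\le\binom{D+1}{2}$. Apply Proposition \ref{interp:prop1} to the points $\gamma_0,\dots,\gamma_{T_L-1}$ to obtain, for each $0\le j<T_L$, a unique Lagrange polynomial $R_j\in\bC[\uX]_L$ with $R_j(\ugamma_i)=\delta_{ij}$ and length bound $\cL(R_j)\le(c_2L)^{3L}$.

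The key quantitative input is the interpolation identity $X_k^L=\sum_{j<T_L}\gamma_{j,k}^L R_j(\uX)$, which evaluated at $\ualpha$ gives $1=|\alpha_k|^L\le A_k^L\sum_j|R_j(\ualpha)|$, where $A_k:=\max_{j<T_L}|\gamma_{j,k}|\le\exp(O(T))$ (using $|s|>1$); hence
\[
 \sum_{j=0}^{T_L-1}|R_j(\ualpha)|\;\ge\;A_k^{-L}.
\]
The argument then splits into two cases according to whether the bulk of this sum lies in the indices $j<T$ or $j\ge T$. In the former case, some $j_0<T$ has $|R_{j_0}(\ualpha)|\ge A_k^{-L}/(2T)$; I would pick a linear form $\ell$ with $\|\ell\|\le 1$, $\ell(\ugamma_{j_0})=0$, and $|\ell(\ualpha)|\ge c\,\dist(\alpha,\gamma_{j_0})\ge c\delta$ (existence by maximizing $v\cdot\ualpha$ over $v\perp\ugamma_{j_0}$ with $\|v\|_\infty\le 1$), and set $P:=R_{j_0}\,\ell\,X_k^{D-L-1}$. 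This $P$ vanishes at every $\ugamma_i$ with $i<T$ (via $\ell$ when $i=j_0$, via $R_{j_0}$ otherwise), so $P\in I_D^{(T)}$, and the desired estimate follows by combining the lower bounds with $\|P\|\le\cL(P)\le 3(c_2L)^{3L}$.

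In the latter case, some $j_1\ge T$ has $|R_{j_1}(\ualpha)|\ge A_k^{-L}/(2T_L)$; the crucial observation is that $R_{j_1}$ itself already lies in $I_L^{(T)}$, since $j_1\ge T$ forces $R_{j_1}(\ugamma_i)=\delta_{i,j_1}=0$ for every $i<T$. Thus $P:=R_{j_1}\,X_k^{D-L}$ automatically lies in $I_D^{(T)}$, and a direct estimate (using $\delta\le 2$ to absorb the missing factor of $\delta$) yields the bound. All the extraneous factors $T,T_L,(c_2L)^{3L},A_k^L$ are at most $\exp(O(T^{3/2}))$ and can be absorbed into $c_3$. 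The main obstacle I anticipate is exactly this second case: a construction based solely on the closest point $\gamma_{i_0}$ would demand a uniform pointwise lower bound on $|R_{i_0}(\ualpha)|$ that cannot hold in general, and the Lagrange dichotomy above bypasses this obstruction cleanly without invoking any quantitative zero-estimate for the interpolants themselves.
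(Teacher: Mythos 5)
Your argument is correct and is essentially the paper's own proof: same reduction to the Lagrange basis of degree $L\approx\sqrt{2T}$ from Proposition \ref{interp:prop1}, same identity $X_k^L=\sum_j\gamma_{j,k}^L Q_j$, and the same dichotomy on whether the dominant index lies in $\{0,\dots,T-1\}$ or beyond (the paper singles out the index maximizing $|a_jQ_j(\ualpha)|$ rather than splitting the sum, and writes the auxiliary linear form explicitly as one of $X_1-(\xi+ir)X_0$, $X_2-\eta s^iX_0$, $(\xi+ir)X_2-\eta s^iX_1$, but these are cosmetic variants of your steps). The closing remark about why one cannot work only with the nearest $\gamma_{i_0}$ is exactly the point of the paper's two-case construction.
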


\begin{proof}
Let $\ualpha=(\alpha_0,\alpha_1,\alpha_2)\in\bC^3$ be a
representative of $\alpha$ of norm $1$ and let $k\in\{0,1,2\}$
such that $|\alpha_k|=1$.  We denote by $L$ the smallest non-negative
integer with $T\le \binom{L+2}{2}$ and set $M=\binom{L+2}{2}$.
Then, by hypothesis, we have $0\le L<D$.  According to Proposition
\ref{interp:prop1}, there exists a basis $(Q_0,\dots,Q_{M-1})$
of $\bC[\uX]_L$ such that $Q_j(\ugamma_i)=\delta_{i,j}$ for any pair
of indices $(i,j)$ with $0\le i,j<M$.  Moreover, these polynomials
have length $\cL(Q_j)\le (c_2L)^{3L}$ for $j=0,\dots,M-1$.  Write
\begin{equation}
 \label{interp:prop:dist:eq1}
 X_k^L = \sum_{j=0}^{M-1}a_jQ_j(\uX)
\end{equation}
with $a_0,\dots,a_{M-1}\in\bC$, and let $i$ denote the index for
which $|a_iQ_i(\ualpha)|$ is maximal.  By construction, $a_i$ is
the value of the polynomial $X_k^L$ at the point $\ugamma_i$, and
so we have $|a_i|\le \|\ugamma_i\|^L$.  Then, upon evaluating both
sides of \eqref{interp:prop:dist:eq1} at the point $\ualpha$,
we obtain
\begin{equation}
 \label{interp:prop:dist:eq2}
 1=|\alpha_k|^L
   \le M |a_i|\, |Q_i(\ualpha)|
   \le M \|\ugamma_i\|^L |Q_i(\ualpha)|.
\end{equation}
Suppose first that $i<T$.  Then, we denote by $E(\uX)$ one of
the linear forms
\[
 X_1-(\xi+ir)X_0,\quad
 X_2-\eta s^iX_0,\quad
 (\xi+ir)X_2-\eta s^iX_1
\]
for which $|E(\ualpha)|=\|\ugamma_i\|\dist(\alpha,\gamma_i)$,
and we set $P(\uX)=X_k^{D-L-1}E(\uX)Q_i(\uX)$.  Since $L<D$
and $E(\ugamma_i)=0$, we have $P(\uX)\in I^{(M)}_D
\subseteq I^{(T)}_D$, and so, by definition,
\begin{equation}
 \label{interp:prop:dist:eq3}
 |P(\ualpha)|\le \cL(P)|I^{(T)}_D|_\alpha.
\end{equation}
As $|P(\ualpha)|=\|\ugamma_i\|\dist(\alpha,\gamma_i)|Q_i(\ualpha)|$
and $\cL(P)\le \cL(E)\cL(Q_i) \le 2\|\ugamma_i\|(c_2L)^{3L}$, this
together with \eqref{interp:prop:dist:eq2} yields
\[
 \dist(\alpha,\gamma_i)
   \le \frac{2 (c_2L)^{3L}}{|Q_i(\ualpha)|}\,|I^{(T)}_D|_\alpha
   \le 2M \|\ugamma_i\|^L (c_2L)^{3L} |I^{(T)}_D|_\alpha.
\]
In the complementary case where $i\ge T$, we set
$P(\uX)=X_k^{D-L}Q_i(\uX)$.   Then $P$ belongs to $I^{(T)}_D$ since
$Q_i\in I^{(T)}_L$, and so \eqref{interp:prop:dist:eq3} holds again.
Using \eqref{interp:prop:dist:eq2}, this gives
\[
 1
   \le \frac{(c_2L)^{3L}}{|Q_i(\ualpha)|}\,|I^{(T)}_D|_\alpha
   \le M \|\ugamma_i\|^L (c_2L)^{3L} |I^{(T)}_D|_\alpha.
\]
So, in both cases, we obtain
\[
 \dist\big(\alpha,\{\gamma_0,\dots,\gamma_{T-1}\}\big)
  \le 2M \|\ugamma_i\|^L (c_2L)^{3L} |I^{(T)}_D|_\alpha
\]
using the fact that the distance in the left hand side is
bounded above by $2$.
The conclusion follows because $i<M\le 2T$, $L\le \sqrt{2T}$
and $\|\ugamma_i\|\le c|s|^i$ for a constant $c>0$.
\end{proof}

%%%%%%%%%%%%%%%%%%%%%%%%%%%%%%%%
%%%%%%%%%%%%%%%%%%%%%%%%%%%%%%%%%

\section{Preliminaries on heights}
 \label{sec:height}

Our notation differs slightly from \cite{LR} and \cite{R2012}.
For any set $S$ of homogeneous polynomials of $\bC[\uX]$,
we denote by $\cZ(S)$ the set of common zeros of the elements
of $S$ in $\bP^2(\bC)$.  We define a \emph{$\bQ$-subvariety}
of\/ $\bP^2(\bC)$ to be a non-empty subset of the form $\cZ(\gp)$
for some homogeneous prime ideal $\gp$ of $\bQ[\uX]$.

Let $Z$ be a $\bQ$-subvariety of\/ $\bP^2(\bC)$, let $t=\dim(Z)$ denote its
dimension, and let $D$ be a positive integer.  The \emph{Chow form}
of $Z$ in degree $D$ is the polynomial map
$
 F \colon \bC[\uX]_D^{t+1} \fleche \bC
$
characterized uniquely up to multiplication by $\pm 1$ by the
following two properties:
\begin{itemize}
 \item[1)] its zeros are the $(t+1)$-tuples of
  polynomials $(P_0,\dots,P_t) \in \bC[\uX]_D^{t+1}$
  having at least one common zero on $Z$,
 \item[2)] its underlying polynomial relative to the basis of
  $(t+1)$-tuples of pure monomials $\uX^{\nu}$
  of degree $D$ has coefficients in $\bZ$ and is irreducible over $\bZ$.
\end{itemize}
We define the \emph{height} $h(Z)$ of $Z$ as the logarithm of
the norm of the latter polynomial when $D=1$.  We also denote by
$\deg(Z)$ the \emph{degree} of $Z$.  This is the cardinality of the
intersection of $Z$ with a generic linear subvariety of \/$\bP^2(\bC)$
of codimension $t$.  It is also characterized by the fact that
the Chow form of $Z$ in degree $1$ is separately homogeneous of
degree $\deg(Z)$ in each of its $t+1$ polynomial arguments.

For example, suppose that $Z$ has dimension $0$.  Let
$\ualpha=(\alpha_0,\alpha_1,\alpha_2)\in\Qbar^3$ be a representative
of a point of $Z$ chosen so that at least one of its coordinates
$\alpha_k$ is equal to $1$.  Then $\deg(Z)$ is the degree $n=[K:\bQ]$
of the field $K=\bQ(\ualpha)$ over $\bQ$.  Moreover, if $\sigma_i\colon
K \to \bC$ ($1\le i\le n$) are all the embeddings of $K$ into
$\bC$, then
\[
 \sigma_i(\ualpha)
 :=(\sigma_i(\alpha_0),\sigma_i(\alpha_1),\sigma_i(\alpha_2))
 \quad (1\le i\le n)
\]
are representatives
of the $n$ points of $Z$.  Moreover, the Chow form of $Z$ in
degree 1 is the map $F\colon\bC[\uX]_1\to\bC$ given by
\[
 F(L) = a\prod_{i=1}^n L(\sigma_i(\ualpha))
\]
for an appropriate non-zero integer $a$ chosen so that the polynomial
underlying $F$ has content $1$.  Then, it is easy to compare the height
of $Z$ with the absolute logarithmic Weil height of $\ualpha$
defined as
\[
 \habs(\ualpha)
 = \frac{1}{n}\sum_{\nu\in\cM(K)} [K_\nu:\bQ_\nu] \log\|\ualpha\|_\nu
\]
where $\nu$ runs through the set $\cM(K)$ of all places of $K$,
where, for each $\nu\in\cM(K)$, the fields $K_\nu$ and $\bQ_\nu$
are respectively the completions of $K$ at $\nu$ and of $\bQ$ at the
place of $\bQ$ induced by $\nu$, and where $\|\ \|_\nu$ stands for
the maximum norm on $K_\nu^3$.  We first note that, by the choice
of $a$, we have
\[
 \habs(\ualpha)
 = \frac{1}{n}\log|a| + \frac{1}{n}\sum_{i=1}^n \log\|\sigma_i(\ualpha)\|
\]
and so, Gel'fond's inequality \cite[Ch.~III, \S3, Lemma II]{Ge}
relating the norm of a product of
polynomials to the product of their norms yields
\[
 |n^{-1}h(Z)-\habs(\ualpha)| \le 3.
\]
We will derive several consequences of this
estimate, starting with the following result.

\begin{lemma}
 \label{height:lemma1}
Let $Z$ be a $\bQ$-subvariety of\/ $\bP^2(\bC)$ of
dimension $0$.  For each $i\in\bZ$, the translate $\tau^i(Z)$
is a $\bQ$-subvariety of \/$\bP^2(\bC)$ with
\[
 \deg(\tau^i(Z))=\deg(Z)
 \et
 \big|h(\tau^i(Z))-h(Z)\big| \le c_4|i|\deg(Z)
\]
for some $c_4=c_4(r,s)>0$.
\end{lemma}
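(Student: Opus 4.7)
The plan is to prove the three assertions in turn, the first two being formal consequences of $\utau$ being defined over $\bQ$ (since $r,s\in\bQ$), and the third reducing, via the inequality $|n^{-1}h(Z)-\habs(\ualpha)|\le 3$ recorded above, to a bound on the change in absolute Weil height under $\utau^i$.

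For the first two assertions, observe that $\utau$ is a $\bQ$-linear invertible map, hence induces a $\bQ$-algebra automorphism of $\bQ[\uX]$ homogeneous of degree $0$, whose inverse $i$-th iterate sends the defining homogeneous prime ideal of $Z$ to one defining $\tau^i(Z)$; so $\tau^i(Z)$ is a $\bQ$-subvariety. Next fix a representative $\ualpha\in\Qbar^3$ of a point of $Z$ with $K=\bQ(\ualpha)$ of degree $n=\deg(Z)$ over $\bQ$. Since $\utau^i$ has rational coefficients, $\sigma(\utau^i(\ualpha))=\utau^i(\sigma(\ualpha))$ for every embedding $\sigma\colon K\to\bC$, so the Galois orbit of $\utau^i(\ualpha)$ is in bijection with that of $\ualpha$; hence $\deg(\tau^i(Z))=n$.

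The remaining content is the height bound. Applying $|n^{-1}h(\cdot)-\habs(\cdot)|\le 3$ to both $Z$ and $\tau^i(Z)$ gives
\[
 \bigl|h(\tau^i(Z))-h(Z)\bigr|
   \le 6n + n\bigl|\habs(\utau^i(\ualpha))-\habs(\ualpha)\bigr|,
\]
so the task reduces to producing a constant $c_5=c_5(r,s)>0$ with $|\habs(\utau^i(\ualpha))-\habs(\ualpha)|\le c_5|i|$. I obtain this place-by-place: at each place $v$ of $K$, $\|\utau(\ubeta)\|_v\le C_v\|\ubeta\|_v$ with $C_v\ge 1$ depending only on $v$, $r$, $s$, and with $C_v=1$ at all but the archimedean places and the finitely many non-archimedean places dividing the denominators of $r$ and $s$. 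Summing $n^{-1}[K_v:\bQ_v]\log C_v$ over all places of $K$ therefore yields a finite constant $c_5$ depending only on $r$ and $s$, giving $\habs(\utau(\ubeta))\le\habs(\ubeta)+c_5$ for every $\ubeta\in\bP^2(\Qbar)$. Applying the same bound to $\utau^{-1}$ and iterating supplies a matching lower bound; for $|i|\ge 1$, absorbing the additive $6n$ into a slightly enlarged $c_4=6+c_5$ completes the proof, the case $i=0$ being trivial. The only delicate step is the place-by-place analysis, but it is a direct consequence of $\utau$ being $\bQ$-rational, so I do not anticipate a genuine obstacle.
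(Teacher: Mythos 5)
Your proof is correct and follows essentially the same route as the paper: reduce the Chow-form height comparison to an absolute Weil height comparison via the estimate $|n^{-1}h(Z)-\habs(\ualpha)|\le 3$, then bound the change in $\habs$ under $\utau$ place by place (the paper writes the explicit bound $\|\utau(\ualpha)\|_\nu\le 2^{\epsilon_\nu}\max\{1,|r|_\nu\}\max\{1,|s|_\nu\}\|\ualpha\|_\nu$, yielding $c=\log 2+\habs(1,r)+\habs(1,s)$), and absorb the additive $6n$ into $c_4=6+c$. The only cosmetic difference is that the paper reduces to $i=1$ and iterates at the level of $h$, whereas you iterate at the level of $\habs$ and apply the comparison inequality once; both give the same constant.
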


\begin{proof}
It suffices to prove this for $i=1$.  Let $\gp$
be the prime ideal of $\bQ[\uX]$ defining $Z$.  As $\Phi$
restricts to an automorphism of $\bQ[\uX]$, the set
$\Phi^{-1}(\gp)$ is also a prime ideal of $\bQ[\uX]$.
Thus $\tau(Z)=\cZ(\Phi^{-1}(\gp))$ is a $\bQ$-subvariety
of \/$\bP^2(\bC)$.  It has the same degree $n$ as $Z$ because
the map $\tau$ is a bijection from $\bP^2(\bC)$ to itself.

Let $\ualpha=(\alpha_0,\alpha_1,\alpha_2)$ be a representative
of a point of $Z$ with at least one coordinate equal to $1$, and
let $K=\bQ(\ualpha)$.  Then
$\utau(\ualpha)=(\alpha_0,r\alpha_0+\alpha_1,s\alpha_2)$
is a representative in $K^3$ of a point of $\tau(Z)$, and so
\[
 \frac{1}{n} |h(\tau(Z))-h(Z)|
   \le 6 + |\habs(\tau(\ualpha))-\habs(\ualpha)|.
\]
On the other hand, for each place $\nu$ of $K$, we have
\[
 \|\tau(\ualpha)\|_\nu
  \le 2^{\epsilon_\nu} \max\{1,|r|_\nu\} \max\{1,|s|_\nu\} \|\ualpha\|_\nu.
\]
where $\epsilon_\nu=1$ if $\nu|\infty$ and $\epsilon_\nu=0$ otherwise.
This yields
\[
 \habs(\utau(\ualpha)) \le c + \habs(\ualpha)
\]
where $c=\log(2)+\habs(1,r)+\habs(1,s)$.  Similarly, we find that
$\habs(\ualpha)\le c + \habs(\utau(\ualpha))$, and so we conclude that
$|h(\tau(Z))-h(Z)| \le c_4n$ where $c_4=6+c$.
\end{proof}

\begin{prop}
 \label{height:prop}
Let $Z,Z^*$ be distinct $\bQ$-subvarieties of \/$\bP^2(\bC)$ of dimension $0$,
and let $\cA$ be any subset of $Z\times Z^*$.  Then,
\[
 \sum_{(\alpha,\alpha^*)\in\cA} \log \dist(\alpha,\alpha^*)
 \ge -7\deg(Z)\deg(Z^*)-\deg(Z)h(Z^*)-\deg(Z^*)h(Z).
\]
\end{prop}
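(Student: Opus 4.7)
Set $n=\deg(Z)$ and $m=\deg(Z^*)$. The idea is to express $\sum\log\dist$ via wedge products and archimedean norms, and then to apply the product formula in $\bQ$ to a rational number formed from a single coordinate component of the wedge.

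\emph{Reduction and decomposition.} Since $\dist\le 2$ on $\bP^2(\bC)$, replacing $\cA$ by the full product $Z\times Z^*$ costs at most $nm\log 2$, to be absorbed into the final constant $7$. I would take representatives $\ualpha^{(i)}$ $(i=1,\ldots,n)$ and $\ubeta^{(j)}$ $(j=1,\ldots,m)$ of the points of $Z$ and $Z^*$ as Galois conjugates of single vectors $\ualpha$ and $\ubeta$ normalized so that one coordinate equals $1$. Then
\[
 \sum_{Z\times Z^*}\log\dist
   \;=\; T - m\sum_i\log\|\ualpha^{(i)}\|_\infty - n\sum_j\log\|\ubeta^{(j)}\|_\infty,
\]
where $T:=\sum_{i,j}\log\|\ualpha^{(i)}\wedge\ubeta^{(j)}\|_\infty$. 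The last two sums equal the archimedean parts of $n\,\habs(\ualpha)$ and $m\,\habs(\ubeta)$ respectively, and will be cancelled by corresponding contributions in the lower bound on $T$.

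\emph{Lower bound on $T$ and conclusion.} For each $0\le k<l\le 2$ I would form $\Theta_{kl}:=\prod_{i,j}(\alpha_k^{(i)}\beta_l^{(j)}-\alpha_l^{(i)}\beta_k^{(j)})$, which is symmetric in both Galois orbits and so lies in $\bQ$. Since $Z\cap Z^*=\emptyset$, every wedge $\ualpha^{(i)}\wedge\ubeta^{(j)}$ is nonzero; by applying a $\bQ$-linear change of coordinates on $\bP^2$ of bounded norm (whose height and distance cost is $O(nm)$, absorbed into the constant), one may arrange that $\Theta_{01}\ne 0$. The term-wise bound $\|\ualpha^{(i)}\wedge\ubeta^{(j)}\|_\infty\ge |\alpha_0^{(i)}\beta_1^{(j)}-\alpha_1^{(i)}\beta_0^{(j)}|$ then yields $T\ge \log|\Theta_{01}|_\infty$. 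Applying the product formula to $\Theta_{01}\in\bQ^*$, together with the ultrametric bound $|\alpha_k^{(i)}\beta_l^{(j)}-\alpha_l^{(i)}\beta_k^{(j)}|_w\le \|\ualpha^{(i)}\|_w\,\|\ubeta^{(j)}\|_w$ at each finite place $w$ of a Galois closure $K$ of $\bQ(\ualpha,\ubeta)$ and the averaging identity $\sum_{\sigma\in\mathrm{Gal}(K/\bQ)}\log\|\sigma(\ualpha)\|_w = [K_w:\bQ_p]\sum_{w'|p}\log\|\ualpha\|_{w'}$, produces $\log|\Theta_{01}|_\infty \ge -nm(h_{\mathrm{fin}}(\ualpha)+h_{\mathrm{fin}}(\ubeta))$, where $h_{\mathrm{fin}}:=\habs-h_\infty$ is the non-archimedean part of $\habs$. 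The $h_\infty$-terms cancel exactly with the normalization sums from the decomposition, and the paper's estimate $n\,\habs(\ualpha)\le h(Z)+3n$ then gives $\sum_{Z\times Z^*}\log\dist\ge -m\,h(Z)-n\,h(Z^*)-6nm$; adding the $-nm\log 2$ from the initial reduction yields the stated bound with constant $7$.

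\emph{Main obstacle.} The principal technical step is the place-sum bookkeeping needed to extract the correct multiplicative factors (namely $m$ on $h(Z)$ and $n$ on $h(Z^*)$, rather than $nm$ times either) from the Galois averages at non-archimedean places. A secondary point is arranging $\Theta_{01}\ne 0$: this may fail in coordinate-degenerate configurations where the projections $\pi_{01}(Z)$ and $\pi_{01}(Z^*)$ happen to share a Galois orbit, but is always resolvable by a bounded-cost $\bQ$-linear change of variables on $\bP^2$.
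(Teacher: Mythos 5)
Your overall strategy---express $\sum\log\dist$ in terms of wedge products and invoke the product formula on a Galois-symmetric quantity---is close in spirit to what the paper does, but you run it globally over all of $Z\times Z^*$ at once, and this creates a genuine gap at the step where you claim one can arrange $\Theta_{01}\neq 0$ by a ``bounded-cost'' $\bQ$-linear change of coordinates. There are up to $nm$ distinct conditions $(M\ualpha^{(i)}\wedge M\ubeta^{(j)})_{01}=0$ to avoid, and the Galois-invariant polynomial $\prod_{i,j}(M\ualpha^{(i)}\wedge M\ubeta^{(j)})_{01}$ has degree $2nm$ in the entries of $M$; finding an integer $M$ at which it does not vanish generically forces $\|M\|$ to grow like $nm$, i.e.\ $\log\|M\|\asymp\log(nm)$. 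The resulting change of coordinates then shifts $\sum\log\dist$, as well as $\deg(Z^*)h(Z)$ and $\deg(Z)h(Z^*)$, by quantities of order $nm\log(nm)$, which cannot be absorbed into the fixed constant $7$ in the statement. You flag this as a ``secondary point'', but it is in fact the crux, and the claim of bounded norm is not justified.

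The paper's proof sidesteps this issue entirely by first decomposing $Z\times Z^*$ into Galois orbits and working one orbit $\cO$ at a time. For a representative pair $(\ugamma,\ugamma^*)$ with $K=\bQ(\ugamma,\ugamma^*)$, it writes
\[
\sum_{\nu\in\cM(K)}\frac{[K_\nu:\bQ_\nu]}{[K:\bQ]}\log\frac{\|\ugamma\wedge\ugamma^*\|_\nu}{\|\ugamma\|_\nu\,\|\ugamma^*\|_\nu}
= \habs(\ugamma\wedge\ugamma^*)-\habs(\ugamma)-\habs(\ugamma^*),
\]
and uses $\habs(\ugamma\wedge\ugamma^*)\ge 0$. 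Because the Weil height is built from the \emph{maximum} norm at every place, this nonnegativity holds automatically for any nonzero vector $\ugamma\wedge\ugamma^*$; no particular coordinate $(k,l)$ of the wedge needs to be nonvanishing and no coordinate change is needed. Your $\Theta_{01}$ is, in effect, the product over all orbits of the $(0,1)$-component of the wedge; replacing the fixed component by the max over components, and doing so orbit by orbit, is exactly what the paper's argument accomplishes and is how the gap should be closed. The rest of your bookkeeping (the $nm\log 2$ reduction to $\cA=Z\times Z^*$, the cancellation of archimedean terms, and the comparison $|n^{-1}h(Z)-\habs(\ualpha)|\le 3$) is consistent with the paper's.
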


\begin{proof}
The set $Z\times Z^*$ is invariant under the Galois
group of $\Qbar$ over $\bQ$.  Let $\cO$ be one of its orbits and
let $(\gamma,\gamma^*)\in\cO$.  Since $Z$ and $Z^*$ are distinct,
the points $\gamma$ and $\gamma^*$ are also distinct.
Choose representatives $\ugamma$ of $\gamma$ and $\ugamma^*$ of
$\gamma^*$ having at least one coordinate equal to $1$ and set
$K=\bQ(\ugamma,\ugamma^*)$.  Then the cardinality of $\cO$ is
$|\cO|=[K:\bQ]$.  Moreover, for each place $\nu\in\cM(K)$,
we have
\[
 \frac{\|\ugamma\wedge\ugamma^*\|_\nu}%
      {\|\ugamma\|_\nu\|\ugamma^*\|_\nu}
 \le \begin{cases}
       1 &\text{if\, $\nu\nmid\infty$,}\\
       2 &\text{if\, $\nu\mid\infty$.}
     \end{cases}
\]
From this we deduce that
\begin{align*}
 \sum_{\nu\in\cM(K)}
   \frac{[K_\nu:\bQ_\nu]}{[K:\bQ]}
   \log \frac{\|\ugamma\wedge\ugamma^*\|_\nu}%
         {\|\ugamma\|_\nu\|\ugamma^*\|_\nu}
 &\le \log(2)
   + \sum_{\stackrel{\scriptstyle\sigma\colon K\hookrightarrow \bC}%
          {(\sigma(\gamma),\sigma(\gamma^*))\in\cA}}
         \frac{1}{[K:\bQ]}
         \log \frac{\|\sigma(\ugamma)\wedge\sigma(\ugamma^*)\|}%
               {\|\sigma(\ugamma)\|\,\|\sigma(\ugamma^*)\|}\\
 &= \log(2) +
   \frac{1}{|\cO|} \sum_{(\alpha,\alpha^*)\in\cA\cap\cO}
   \log \dist(\alpha,\alpha^*).
\end{align*}
On the other hand, we have
\[
 \sum_{\nu\in\cM(K)}
   \frac{[K_\nu:\bQ_\nu]}{[K:\bQ]}
   \log \frac{\|\ugamma\wedge\ugamma^*\|_\nu}%
         {\|\ugamma\|_\nu\|\ugamma^*\|_\nu}
 =\habs(\ugamma\wedge\ugamma^*)-\habs(\ugamma)-\habs(\ugamma^*).
\]
Since $\habs(\ugamma\wedge\ugamma^*)\ge 0$, the combination of these two
estimates yields
\begin{align*}
 \frac{1}{|\cO|} \sum_{(\alpha,\alpha^*)\in\cA\cap\cO}
   \log \dist(\alpha,\alpha^*)
 &\ge -\habs(\ugamma)-\habs(\ugamma^*) - \log(2)\\
 &\ge -\frac{h(Z)}{\deg(Z)}-\frac{h(Z^*)}{\deg(Z^*)} - 7.
\end{align*}
Summing over all orbits $\cO$, this gives
\begin{align*}
 \sum_{(\alpha,\alpha^*)\in\cA}
   \log \dist(\alpha,\alpha^*)
 &\ge -\left(\sum |\cO|\right)
       \left(\frac{h(Z)}{\deg(Z)}+\frac{h(Z^*)}{\deg(Z^*)}+7\right)\\
 &= -7\deg(Z)\deg(Z^*)-\deg(Z)h(Z^*)-\deg(Z^*)h(Z),
\end{align*}
since $\sum |\cO| = |Z\times Z^*| =\deg(Z)\deg(Z^*)$.
\end{proof}

We conclude this section with the following counterpart
to \cite[Lemma 5.4]{R2012}, which readily implies the
assertion made in the introduction just before the statement
of Theorem \ref{intro:thm}.

\begin{lemma}
 \label{height:lemma:gcd}
Let $D\in\bN^*$ and $P\in\bC[\uX]_D$.  Suppose that $P$
is not divisible by $X_0$ nor by $X_2$.  Then the polynomials
$P,\Phi(P),\dots,\Phi^D(P)$ have no common irreducible factor
in $\bC[\uX]$.
\end{lemma}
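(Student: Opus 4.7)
I would argue by contradiction. Suppose $Q\in\bC[\uX]$ is a common irreducible factor of $P,\Phi(P),\dots,\Phi^D(P)$. Since $Q$ divides $P$ and, by hypothesis, neither $X_0$ nor $X_2$ divides $P$, the polynomial $Q$ is not proportional to $X_0$ or to $X_2$. Recalling that $\Phi^i(P)(\uz)=P(\utau^i(\uz))$, the relation $\cZ(\Phi^i(P))=\tau^{-i}(\cZ(P))$ shows that $Q\mid\Phi^i(P)$ translates geometrically as $\tau^i(\cZ(Q))\subseteq\cZ(P)$. For $i=0,1,\dots,D$, this produces $D+1$ irreducible curves of $\bP^2(\bC)$ contained in $\cZ(P)$. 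But $P$ has degree $D$, so $\cZ(P)$ has at most $D$ distinct irreducible components. By the pigeonhole principle, $\tau^i(\cZ(Q))=\tau^j(\cZ(Q))$ for some $0\le i<j\le D$, and writing $m=j-i\ge 1$, the curve $\cZ(Q)$ is invariant under $\tau^m$.

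Algebraically, this means $\cZ(\Phi^m(Q))=\tau^{-m}(\cZ(Q))=\cZ(Q)$, so both $Q$ and $\Phi^m(Q)$ cut out the same irreducible curve, forcing $\Phi^m(Q)=\lambda Q$ for some $\lambda\in\bC^*$. I would then classify such eigenvectors. Let $d=\deg Q$ and expand $Q=\sum_{c=0}^d X_2^c\,Q_c(X_0,X_1)$ with $Q_c\in\bC[X_0,X_1]_{d-c}$. Since $\Phi^m(X_2)=s^mX_2$ and $\Phi^m$ acts on $\bC[X_0,X_1]$ by $X_1\mapsto X_1+mrX_0$, the eigenvalue relation yields
\[
 s^{mc}\,Q_c(X_0,X_1+mrX_0)=\lambda\,Q_c(X_0,X_1)
 \quad(0\le c\le d).
\]
Comparing top $X_1$-coefficients shows $\lambda=s^{mc}$ for every index $c$ with $Q_c\ne 0$. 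Since $|s|>1$ (Section \ref{sec:not}), the powers $s^{mc}$ are pairwise distinct, so only one $Q_c$ is nonzero, say $Q=X_2^{c_0}Q_{c_0}(X_0,X_1)$. Then $Q_{c_0}$ is fixed by the unipotent substitution $X_1\mapsto X_1+mrX_0$ (with $r\ne 0$), whose only invariants in $\bC[X_0,X_1]_{d-c_0}$ are scalar multiples of $X_0^{d-c_0}$. Hence $Q=\alpha X_0^{d-c_0}X_2^{c_0}$ for some $\alpha\in\bC^*$.

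For $Q$ to be irreducible it must therefore be proportional to $X_0$ or to $X_2$, which contradicts the opening observation and finishes the argument. The only real work lies in the eigenvector classification of the middle paragraph; the two crucial inputs are the non-torsion hypothesis $|s|>1$, which separates the graded pieces with respect to $X_2$, and the non-triviality $r\ne 0$ of the unipotent part, which pins down $Q_{c_0}$ as a power of $X_0$.
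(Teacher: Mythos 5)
Your proof is correct and follows essentially the same route as the paper: pigeonhole on $D+1$ shifted copies versus the degree-$D$ polynomial $P$ to force $\Phi^m(Q)=\lambda Q$ for some $m\neq 0$, then classify the eigenvectors of $\Phi^m$ on $\bC[\uX]_d$ via the $X_2$-grading, using $|s|>1$ to isolate a single graded piece and $r\neq 0$ to pin $Q$ down to a monomial $X_0^{d-c_0}X_2^{c_0}$. The only cosmetic difference is that you phrase the pigeonhole step geometrically (counting irreducible components of $\cZ(P)$) where the paper counts irreducible factors of $P$ directly, which are equivalent observations.
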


\begin{proof}
Suppose on the contrary that these polynomials have a common
irreducible factor $Q$.  Then $Q$ is homogeneous of some
degree $t\ge 1$.  Since $\Phi$ is a degree preserving
$\bC$-algebra automorphism of $\bC[\uX]$, we deduce that
$\Phi^{-i}(Q)$ is an homogeneous irreducible factor of $P$
of degree $t$ for $i=0,\dots,D$.  Since $\deg(P)<(D+1)t$,
two of these factors must be associates.  Thus there exists
$k\in\bZ$ with $k\neq 0$ such that $\Phi^k(Q)=\lambda Q$
for some $\lambda\in \bC^*$. In other words, $Q$ is an
eigenvector for the restriction of $\Phi^k$ to $\bC[\uX]_t$.
However, $\bC[\uX]_t=\oplus_{i=0}^t X_2^i\bC[X_0,X_1]_{t-i}$
is a direct sum decomposition of $\bC[\uX]_t$ into invariant
subspaces for $\Phi^k$ and, for each $i=0,\dots,t$, the
restriction of $\Phi^k$ to $X_2^i\bC[X_0,X_1]_{t-i}$ admits
$s^{ik}$ as its only eigenvalue, with $\bC X_0^{t-i}X_2^i$
as its corresponding eigenspace.  Since the numbers
$1,s^k,\dots,s^{tk}$ are all distinct, it follows that
$Q=aX_0^{t-i}X_2^i$ for some $a\in \bC^*$ and some
$i\in\{0,\dots,t\}$.  This is impossible because $P$ is
not divisible by $X_0$ nor by $X_2$.
\end{proof}

%%%%%%%%%%%%%%%%%%%%%%%%%%%%%%%%%%%
%%%%%%%%%%%%%%%%%%%%%%%%%%%%%%%%%%%%%

%\newpage

\section{Construction of $\bQ$-subvarieties of dimension 0}
 \label{sec:cons}

Throughout this section, we assume that the hypotheses of Theorem \ref{intro:thm}
hold for the current choice of $(\xi,\eta)\in\bC\times\bC^*$ and
$(r,s)\in\bQ\times\bQ^*$ with $r\neq 0$ and $|s|>1$, and for some
choice of parameters $\beta,\sigma,\nu\in\bR$ satisfying the
conditions \eqref{intro:thm:eq1} of the theorem.
We also fix a choice of polynomials $\tP_D$ as in Proposition
\ref{not:prop}, say one for each integer $D\ge D_0$, for some
fixed $D_0\in\bN^*$.  For those $D$, we define
\[
 W_D = \cZ(\Phi^i(\tP_D); \ 0 \le i < 2\lfloor D\rfloor^\sigma).
\]
We first establish the following analog of \cite[Prop.~6.4]{R2012}.

\begin{prop}
 \label{cons:prop1}
Let $D\in\bN^*$ with $D\ge D_0$, and let
$T = \lfloor D^{\sigma} \rfloor$.  If $W_D$ is not empty,
then any $\bQ$-subvariety $Z$ of\/ $\bP^2(\bC)$ contained in $W_D$
has dimension $0$ and, if $D$ is large enough, it satisfies
\[
 \deg(\tau^i(Z)) \le 2D^{2-\sigma}
 \et
 h(\tau^i(Z)) \le 6D^{1+\beta-\sigma}
 \quad
 (|i|<3T).
\]
\end{prop}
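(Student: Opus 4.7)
My plan is to follow closely the strategy of \cite[Prop.~6.4]{R2012}, working with our operator $\Phi$ in place of the derivation there and with $\tau$-translates of $Z$ in place of derivative successors. First I would establish $\dim Z=0$ and the key translate inclusion. Since $\sigma\ge 1$ and $D$ is large, we have $2T\ge D+1$, so $W_D$ is contained in the common zero set of $\tP_D,\Phi\tP_D,\dots,\Phi^D\tP_D$; by Lemma~\ref{height:lemma:gcd} these have no common irreducible factor in $\bC[\uX]$, hence their common zero locus in $\bP^2(\bC)$ has dimension $0$. The identity $(\Phi^j\tP_D)(\tau^i\gamma)=(\Phi^{i+j}\tP_D)(\gamma)$ combined with the vanishing of $\Phi^k\tP_D$ on $Z$ for $0\le k<2T$ then shows that the union $Y:=\bigcup_{i=0}^{T-1}\tau^iZ$ lies in $\cZ(\Phi^j\tP_D;\ 0\le j<T)$.

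Next I would dispose of the degenerate case $\tau^pZ=Z$ for some $p\in[1,T-1]$. The permutation of $Z$ induced by $\tau^p$ has some finite order $k$, so $\tau^{pk}$ fixes $Z$ pointwise; but for $m\ne 0$ the only $\tau^m$-fixed points in $\bP^2(\bC)$ are the rational points $(0:1:0)$ and $(0:0:1)$, so $\bQ$-irreducibility forces $Z$ to equal one of these singletons, and the asserted bounds follow trivially via Lemma~\ref{height:lemma1}. In the complementary case the orbits $\tau^iZ$ ($0\le i<T$) are pairwise distinct, hence pairwise disjoint as distinct Galois orbits, so $|Y|=T\deg(Z)$ and $h(Y)=\sum_{i=0}^{T-1}h(\tau^iZ)$. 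For the degree bound I would factor $\tP_D=\prod_k A_k^{e_k}$ into distinct irreducibles in $\bC[\uX]$; Lemma~\ref{height:lemma:gcd} supplies for each $k$ some $j_k\in[1,D]$ with $A_k\nmid\Phi^{j_k}\tP_D$, and B\'ezout in $\bP^2$ gives $|\cZ(A_k)\cap\cZ(\Phi^{j_k}\tP_D)|\le(\deg A_k)\,D$; summing over $k$ yields $|Y|\le D^2$, hence $\deg(Z)\le D^2/T\le 2D^{2-\sigma}$.

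The height bound is more delicate. I would apply arithmetic B\'ezout in $\bP^2$ to the coprime pairs $(B_k,\Phi^{j_k}\tP_D)$, where $B_k$ now runs over the $\bQ$-irreducible factors of $\tP_D$ and $j_k\in[1,D]$ is again supplied by Lemma~\ref{height:lemma:gcd}. Using Gel'fond's inequality in the form $\sum_k h(B_k)\le h(\tP_D)+O(D)\le 2D^\beta+O(D)$ and summing the individual bounds produces $h(Y)\le 4D^{1+\beta}+O(D^2)$, where additivity of the height over $\bQ$-components is invoked to pass from $\sum_k h(\cZ(B_k,\Phi^{j_k}\tP_D))$ to $h(Y)$. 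Combining with $h(Y)\ge Th(Z)-c_4\deg(Z)\,T^2/2$ from Lemma~\ref{height:lemma1} and the degree bound, then dividing by $T\sim D^\sigma$, yields $h(Z)\le(4+o(1))D^{1+\beta-\sigma}$, the error term $O(D^2)$ being absorbed thanks to $\beta>\sigma+1$. A final application of Lemma~\ref{height:lemma1} gives $h(\tau^iZ)\le h(Z)+c_4|i|\deg(Z)\le 6D^{1+\beta-\sigma}$ for $|i|<3T$ and $D$ large. The main obstacle will be the careful application of arithmetic B\'ezout when $\tP_D$ has several irreducible factors, since there need not be a single globally coprime pair among the $\Phi^j\tP_D$; the decomposition into $\bQ$-factors, combined with Gel'fond's inequality, is the key trick that sidesteps this difficulty.
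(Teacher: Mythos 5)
Your proposal is correct and reaches the same bounds, but it executes the central step in a genuinely different way. The paper's proof, after invoking Lemma~\ref{height:lemma:gcd}, chooses small integers $a_1,\dots,a_D$ so that the single polynomial $Q=\sum_{i=1}^D a_i\Phi^i(\tP_D)$ is coprime to $\tP_D$, and then bounds $\deg W$ and $h(W)$ for $W=\cZ(\tP_D,Q)$ in one shot via the resultant $\Res_{(D,D,1)}(\tP_D,Q,\cdot)$. You instead decompose $\tP_D$ into irreducible (resp.\ $\bQ$-irreducible) factors, use Lemma~\ref{height:lemma:gcd} to find, for each factor $A_k$, some $j_k$ with $A_k\nmid\Phi^{j_k}\tP_D$, and apply ordinary B\'ezout (resp.\ arithmetic B\'ezout plus Gel'fond) factor by factor before summing. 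Both routes are valid and give the same $O(D^2)$ degree and $O(D^{1+\beta})$ height bounds; the paper's trick of forming a single coprime $Q$ keeps the argument to one resultant computation and avoids any appeal to arithmetic B\'ezout, while your factor-wise approach is perhaps more transparent about where coprimality is used but imports arithmetic B\'ezout as an external black box. Your treatment of the degenerate case $\tau^p Z=Z$ (forcing $Z=\{(0{:}1{:}0)\}$ or $\{(0{:}0{:}1)\}$), the inclusion $\tau^i(Z)\subseteq\cZ(\Phi^j\tP_D;\,0\le j\le D)$ for $0\le i<T$ using $T\ge D$, and the final divisions by $T$ with the absorption of $O(D^2)$ terms thanks to $\beta>\sigma+1$ all match the paper. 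One minor bookkeeping point worth flagging: passing from $\sum_k h(\cZ(B_k,\Phi^{j_k}\tP_D))$ to $h(Y)$ is not exact additivity but sub-additivity up to a Gel'fond-type $O(D^2)$ correction, and you should assign to each component $\tau^i Z$ a single index $k(i)$ to avoid double-counting; both are harmless given the slack in the target bound, but should be said.
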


\begin{proof}
The argument is similar to the proof of
\cite[Prop.~6.4]{R2012} and simpler.  Put $P=\tP_D$ and
assume that $W_D\neq\emptyset$.  By Lemma \ref{height:lemma:gcd},
the polynomials $P,\Phi(P),\dots,\Phi^D(P)$ are relatively
prime.  So, as they are homogeneous of degree $D$, there
exist integers $a_1,\dots,a_D$ of absolute values at most
$D$ such that $Q:=\sum_{i=1}^D a_i\Phi^i(P)$ is relatively
prime to $P$.  Then, $W:=\cZ(P,Q)$ has dimension $0$.
So, it is a finite union of $\bQ$-subvarieties of $\bP^2(\bC)$
whose sum of the degrees is $\deg(W)\le D^2$ and whose sum
of the heights is
\[
 h(W)\le D\log\|P\|+D\log\|Q\|+\cO(D^2)
\]
(since the product of their Chow forms in degree $1$ divides
the polynomial map $G\colon\bC[\uX]_1\to\bC$ given by
$G(L)=\Res_{(D,D,1)}(P,Q,L)$ where
$\Res_{(D,D,1)}$ denotes the resultant in degrees $(D,D,1)$).
Assuming that $D$ is large enough, this gives $h(W)\le
5D^{1+\beta}$ thanks to Proposition \ref{not:prop}.

Let $Z$ be a $\bQ$-subvariety of\/ $\bP^2(\bC)$ contained
in $W_D$.  Since $D\le T$, we have $\tau^i(Z)\subseteq W$
for $i=0,\dots,T-1$.  Thus $Z$ has dimension $0$.  If the
sets $\tau^i(Z)$ ($i\in\bZ$) are not all distinct, then
there exists a positive integer $k$ such that $\tau^k(Z)=Z$.
As $Z$ is a finite set, we may further choose $k$ so that $\tau^k$
fixes each element of $Z$.  Then $Z$ consists of a single
point $(0:1:0)$ or $(0:0:1)$, since the latter are the only
points of \/$\bP^2(\bC)$ fixed by a power $\tau^k$ of $\tau$
with $k\in\bN^*$, and since their coordinates are rational.
In that case, we conclude that $\deg(Z)=1$ and $h(Z)=0$,
and the proposition is verified.  Thus, we may assume that
$Z,\tau(Z),\dots,\tau^{T-1}(Z)$ are distinct subvarieties
of \/$\bP^2(\bC)$.  As they are contained in $W$, we conclude that
\[
 \sum_{i=0}^{T-1} \deg(\tau^i(Z)) \le \deg(W)
 \et
 \sum_{i=0}^{T-1} h(\tau^i(Z)) \le h(W).
\]
By Lemma \ref{height:lemma1}, this implies that, for any
integer $i$ with $|i|< 3T$, we have
\[
 \deg(\tau^i(Z))
   = \deg(Z)
   \le \frac{\deg(W)}{T} = \frac{D^2}{T}
\]
and
\[
 h(\tau^i(Z))
  = h(Z)+\cO(T\deg(Z))
  \le \frac{h(W)}{T}+\cO(T\deg(Z))
  \le \frac{5D^{1+\beta}}{T}+\cO(D^2).
\]
The conclusion follows since $\beta>1+\sigma$.
\end{proof}

For each integer $D$ with $D\ge D_0$, we define
\[
 \cC_D
  = \left\{ P \in \bC[\uX]_D \,;\,
      \|P\| \le e^{2D^{\beta}},\,
       \max_{0 \le i < \lfloor D^{\sigma} \rfloor}
           |P(\ugamma_i)| \le e^{-(1/2)D^{\nu}}
    \right\}.
\]
This is a \emph{convex body} of $\bC[\uX]_D$, namely
a compact subset of the vector space $\bC[\uX]_D$
with non-empty interior, which satisfies $\lambda P + \mu Q
\in \cC_D$ for any $P,Q\in \cC_D$
and any $\lambda,\mu\in\bC$ with $|\lambda| + |\mu| \le 1$.
For any $t\in\{0,1,2\}$, and any $\bQ$-subvariety $Z$ of \/$\bP^2(\bC)$
of dimension $t$, we recall from \cite{LR} that the height
of $Z$ relative to $\cC_D$ is defined by
\[
 h_{\cC_D}(Z)
  = h_{\cC_D}(F)
  = \log \sup\{ |F(P_0,\dots,P_t)| \,;\, P_0,\dots,P_t\in \cC_D \}
\]
where $F$ denotes the Chow form of $Z$ in degree $D$.

\begin{prop}
\label{cons:prop2}
For each sufficiently large integer $D$, there exists a $\bQ$-subvariety
$Z_D$ of\/ $\bP^2(\bC)$ of dimension $0$ contained in $W_D$ with
\[
 h_{\cC_D}(Z_D)
 \le - \frac{1}{30} D^{\nu-\beta+\sigma-2} (2D^{\beta}\deg(Z_D) + D h(Z_D)).
\]
\end{prop}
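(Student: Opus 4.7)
The plan is to adapt the proof of the analogous Proposition~6.5 of \cite{R2012} to our setting, performing an arithmetic descent in the spirit of Laurent--Roy \cite{LR} relative to the convex body $\cC_D$ and using the shifted polynomials $\Phi^i \tP_D$ as our main supply of small elements of $\cC_D$. The first observation is that $\Phi^i \tP_D \in \cC_D$ for each $0 \le i < 2\lfloor D^\sigma\rfloor$: the norm bound $\|\Phi^i \tP_D\| \le e^{2D^\beta}$ is immediate from Proposition~\ref{not:prop}, and for $0 \le j < \lfloor D^\sigma\rfloor$ the shift identity $(\Phi^i \tP_D)(\ugamma_j) = \tP_D(\ugamma_{i+j})$ together with $0 \le i+j < 3\lfloor D^\sigma\rfloor < 4\lfloor D^\sigma\rfloor$ yields $|(\Phi^i \tP_D)(\ugamma_j)| \le e^{-(1/2)D^\nu}$.

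Next, apply a Laurent--Roy arithmetic descent in $\bP^2(\bC)$ relative to $\cC_D$: starting from $\bP^2(\bC)$ and repeatedly cutting by well-chosen polynomials of $\cC_D$, one reaches a zero-dimensional $\bQ$-subvariety $Z_D$ on which all the polynomials $\Phi^i \tP_D$ vanish, so $Z_D \subseteq W_D$. By Proposition~\ref{cons:prop1}, the translates of $Z_D$ satisfy $\deg \tau^i(Z_D) \le 2 D^{2-\sigma}$ and $h(\tau^i(Z_D)) \le 6 D^{1+\beta-\sigma}$ for $|i| < 3T$, where $T = \lfloor D^\sigma\rfloor$.

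The crux is to quantify $h_{\cC_D}(Z_D)$. Writing $F$ for the Chow form of $Z_D$ in degree $D$, one has $F(P) = a \prod_{\alpha \in Z_D} P(\ualpha)$ up to integer normalization (with $\ualpha$ of norm $1$). Fix $P \in \cC_D$ and $\alpha \in Z_D$. Since $\alpha \in W_D$, each $\Phi^i \tP_D$ vanishes at $\alpha$; subtracting from it a suitable linear combination of interpolation polynomials of degree $D$ provided by Proposition~\ref{interp:prop1} (lifted from degree $L$ to degree $D$ as in the proof of Proposition~\ref{interp:prop:dist}) kills the residual small values at $\ugamma_0, \ldots, \ugamma_{T-1}$ and produces an element of $I^{(T)}_D$ whose value at $\alpha$ is of size $e^{-(1/2)D^\nu}$ times polynomial factors. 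Proposition~\ref{interp:prop:dist} then furnishes a strong upper bound on $\dist(\alpha, \{\gamma_0, \ldots, \gamma_{T-1}\})$. Combined with Lemma~\ref{not:lemma3}, this shows $|P(\ualpha)| \le |P(\ugamma_j)| + D\,\cL(P)\,\dist(\alpha, \gamma_j)$ is exponentially small for the nearest $\gamma_j$. Taking the product over $\alpha \in Z_D$ and renormalizing by $a$ (which contributes roughly $e^{D h(Z_D)}$) yields the required bound.

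The main obstacle is producing the exact coefficient $\frac{1}{30} D^{\nu-\beta+\sigma-2}$. This demands a careful balancing of four scales, namely $D^\nu$, $D^\beta$, $D^\sigma$, and the bounds $D^{2-\sigma}$ and $D^{1+\beta-\sigma}$ from Proposition~\ref{cons:prop1}, together with the constants from Proposition~\ref{interp:prop:dist} and Lemma~\ref{not:lemma3}. The constant $1/30$ absorbs these losses as well as the normalization between Chow forms of degrees $1$ and $D$.
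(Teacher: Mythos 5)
There is a genuine gap. Your second and third paragraphs invert the logical order of the argument in a way that does not work. The paper's proof proceeds in two steps, both taken from \cite{R2012}: first one bounds $h_{\cC_D}(\bP^2(\bC))$ itself by $-TU + 3YD^2 + O(D^3)$ with $T=\lfloor D^\sigma\rfloor$, $U=\tfrac12 D^\nu$, $Y=2D^\beta$, and this step is where the zero estimate enters — one shows via Theorem~5.6 of \cite{R2012} (applied to $\Sigma=\{(\xi+ir,\eta s^i)\}_{i<T}$ and $\Sigma_1=\{(ir,s^i)\}_{i\le D}$) that the resultant map $\Res_D$ vanishes to order $T$ on triples from $I_D^{(T)}$, and this order-$T$ vanishing, combined with the interpolation estimate of Proposition~\ref{interp:prop1}, is what produces the crucial factor $TU\sim D^{\nu+\sigma}$. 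Only then does the arithmetic descent, run with the $D+1$ coprime polynomials $\tP_D,\Phi(\tP_D),\dots,\Phi^D(\tP_D)\in\cC_D\cap\bZ[\uX]_D$, transmit this top-dimensional bound down to a $0$-dimensional $Z_D$, and it is the descent mechanism that yields the multiplicative dependence on $Y\deg(Z_D)+Dh(Z_D)$. You never carry out (or even mention) the bound on $h_{\cC_D}(\bP^2(\bC))$, and your descent paragraph treats the descent as merely producing $Z_D\subseteq W_D$ without extracting the quantitative height decrease.

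Your third paragraph tries to compensate by computing $h_{\cC_D}(Z_D)$ directly from the Chow form, bounding $|P(\ualpha)|$ for each $\alpha\in Z_D$ via a distance estimate $\dist(\alpha,\{\gamma_0,\dots,\gamma_{T-1}\})$. This fails for two reasons. First, for an arbitrary point $\alpha$ of $W_D$ there is no reason for $\dist(\alpha,\{\gamma_0,\dots,\gamma_{T-1}\})$ to be small; the smallness of this distance (for at least one conjugate, after averaging) is exactly the content of Corollary~\ref{cons:cor}, which is \emph{deduced from} Proposition~\ref{cons:prop2} using \cite[Prop.~2.3]{R2012} — so your route is circular. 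Second, even setting circularity aside, your use of Proposition~\ref{interp:prop:dist} goes the wrong way: that proposition gives $\dist(\alpha,\{\gamma_0,\dots,\gamma_{T-1}\}) \le c_3^{T^{3/2}}|I_D^{(T)}|_\alpha$, so to make the distance small you would need an \emph{upper} bound on $|I_D^{(T)}|_\alpha = \sup\{|P(\ualpha)|: P\in I_D^{(T)}, \|P\|\le 1\}$, i.e.\ over \emph{all} such $P$. Your construction only exhibits a single element $R=\Phi^i\tP_D-Q\in I_D^{(T)}$ with small value at $\ualpha$, which tells you nothing about the supremum. Finally, even if this direct computation could be salvaged it would not produce the term $Dh(Z_D)$ inside the parentheses; that term is intrinsic to the Laurent--Roy descent and cannot be recovered by estimating $|P(\ualpha)|$ pointwise.

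Your first paragraph (that $\Phi^i\tP_D\in\cC_D$ for $0\le i<2\lfloor D^\sigma\rfloor$) is correct and is indeed used in the paper, but it is an ingredient of the descent, not a shortcut around the zero estimate.
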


\begin{proof}
The argument follows very closely the proofs of Propositions 6.1
and 6.2 of \cite{R2012} and so we will simply explain how
these need to be modified in order to yield the present
statement.

Set $T=\lfloor D^{\sigma} \rfloor$.  We first apply Theorem~5.6
of \cite{R2012} to the ideal $I^{(T)}$ for the subsets of the
group $\bC\times\bC^*$ given by
\[
 \Sigma = \{ (\xi + i r, \eta s^i)\,;\,  0 \le i <T \}
 \et
 \Sigma_1 = \{ (i r, s^i) \,;\, \ 0 \le i \le D \}.
\]
The projections of $\Sigma_1$ on each factor of $\bC\times\bC^*$
have cardinality $D+1>D$ while the sumset $\Sigma+\Sigma_1$ has
cardinality $D+T\le D+D^\sigma<\binom{D+2}{2}$, assuming $D$ large enough.
Since the polynomials of $I^{(T)}$ vanish at each point $(1,\gamma)$
with $\gamma\in\Sigma$, we conclude that the resultant
in degree $D$ viewed as a polynomial map $\Res_D\colon
\bC[\uX]_D^3\to \bC $ vanishes up to order $T$ at each
triple of elements of $ I_D^{(T)}$.

We then argue as in the proof of \cite[Prop.~6.1]{R2012}
using $Y=2D^\beta$ and $U=(1/2)D^\nu$ and replacing
everywhere the differential operator $\cD$ with the
translation morphism $\Phi$.  We also use our interpolation
result, Proposition \ref{interp:prop1}, in replacement of
\cite[Prop.~3.3]{R2012}.  Then, assuming that $D$ is
sufficiently large, all estimates work out and we obtain
\[
 h_{\cC_D}(\bP^2(\bC)) \le  -TU + 3YD^2+ 21 \log(3) D^3.
\]
From there, we follow almost word for word the proof of
\cite[Prop.~6.2]{R2012} for the choice of $P=\tP_D$.
We simply need to note that, by Proposition
\ref{not:prop}, the polynomials
$P,\Phi(P),\dots,\Phi^D(P)$ all belong
$\cC_D\cap\bZ[\uX]_D$ and that, by Lemma \ref{height:lemma:gcd},
they have no common irreducible factor in $\bQ[\uX]$.
Since $\beta>1$ and $\nu+\sigma > 2+\beta$, this implies,
for $D$ sufficiently large, the existence of a $\bQ$-subvariety
$Z_D$ of \/$\bP^2(\bC)$ of dimension $0$ contained in
$W_D=\cZ(\Phi^i(P); \ 0 \le i < 2T)$ with
\[
 h_{\cC_D}(Z_D)
 \le - \frac{TU}{7\,D^2Y}(Y\deg(Z_D) + D h(Z_D))
 \le - \frac{1}{30} D^{\nu-\beta+\sigma-2}
          (2D^{\beta}\deg(Z_D) + D h(Z_D)).
 \qedhere
\]
\end{proof}

At the expense of replacing $D_0$ by a larger integer
if necessary, we may assume that the above proposition
applies to each $D\ge D_0$.  For each such integer $D$,
we fix a corresponding choice of $Z_D$.

%\newpage

\begin{cor}
 \label{cons:cor}
If $D\ge D_0$ is sufficiently large, then, upon writing
$T=\lfloor D^{\sigma} \rfloor$, we have
\begin{equation*}
 \sum_{\alpha \in Z_D}
   \Big( D^\beta+ \log \, \dist(\alpha,\{\gamma_0,\dots,\gamma_{T-1}\})\Big) 			
 \le - \frac{1}{30} D^{\nu -\beta + \sigma -2}
      \big(2D^{\beta} \deg(Z_D) + Dh(Z_D)\big).
\end{equation*}
\end{cor}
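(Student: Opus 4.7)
The right-hand side of the corollary is precisely the upper bound for $h_{\cC_D}(Z_D)$ given by Proposition \ref{cons:prop2}, so it suffices to establish the matching lower bound
\[
 \sum_{\alpha \in Z_D}\Big(D^\beta + \log\dist(\alpha,\cS)\Big) \le h_{\cC_D}(Z_D),
\]
where $\cS := \{\gamma_0,\dots,\gamma_{T-1}\}$, and then invoke Proposition \ref{cons:prop2}.

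Since $\dim Z_D=0$, the Chow form of $Z_D$ in degree $D$ is a polynomial map $F_D\colon\bC[\uX]_D\to\bC$ that factors, for a normalized choice of representatives $\ualpha$ of the points $\alpha\in Z_D$, as $|F_D(P)|=|a_D|\prod_{\alpha\in Z_D}|P(\ualpha)|$, where $a_D$ is the rational number making the underlying polynomial have content $1$. By definition $h_{\cC_D}(Z_D)=\log\sup_{P\in\cC_D}|F_D(P)|$, so the lower bound reduces to exhibiting some $P\in\cC_D$ with $|P(\ualpha)|$ bounded below by $e^{D^\beta}\dist(\alpha,\cS)$ at every $\alpha\in Z_D$, up to factors negligible under the hypothesis \eqref{intro:thm:eq1}. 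Such a $P$ is built out of the polynomials furnished by Proposition \ref{interp:prop:dist}: for each $\alpha\in Z_D$ one extracts $P_\alpha\in I_D^{(T)}$ with $\|P_\alpha\|\le 1$ and $|P_\alpha(\ualpha)|\ge c_3^{-T^{3/2}}\dist(\alpha,\cS)$; scaling by $e^{2D^\beta}$ puts $P_\alpha$ into $\cC_D$. Since $|Z_D|+T-1\le 2D^{2-\sigma}+D^\sigma$ is much smaller than $\binom{D+2}{2}$, a standard interpolation refinement lets us simultaneously force $P_\alpha$ to vanish at the other points of $Z_D$, and then a normalized sum of the $P_\alpha$ yields a single polynomial in $\cC_D$ with the required lower bounds at every point of $Z_D$.

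\textbf{Main obstacle.} The delicate part is to control the various error factors that enter the construction: the normalization constant $|a_D|$ (handled via the bounds on $\deg(Z_D)$ and $h(Z_D)$ from Proposition \ref{cons:prop1}), the loss $c_3^{T^{3/2}}$ per point of $Z_D$ coming from Proposition \ref{interp:prop:dist}, and the factor $|Z_D|^{-|Z_D|}$ arising from the normalized sum. Showing that their combined contribution is negligible against the main term $D^{\nu+\sigma-2}\deg(Z_D)$ on the right-hand side of Proposition \ref{cons:prop2} uses the hypothesis \eqref{intro:thm:eq1}, and the splitting there into the cases $\sigma\ge 3/2$ and $\sigma<3/2$ reflects exactly how much slack is available to absorb these corrections.
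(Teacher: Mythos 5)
The reduction in your first paragraph is correct and matches the paper: one shows $\sum_{\alpha\in Z_D}\bigl(D^\beta+\log\dist(\alpha,\cS)\bigr)\le h_{\cC_D}(Z_D)$ with $\cS=\{\gamma_0,\dots,\gamma_{T-1}\}$ and then invokes Proposition \ref{cons:prop2}. But the way you propose to prove that intermediate inequality diverges from the paper and has two real gaps.

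First, you want a single $P^*\in\cC_D$ that is large at \emph{every} $\alpha\in Z_D$, built by forcing each $P_\alpha$ from Proposition \ref{interp:prop:dist} to also vanish at $Z_D\setminus\{\alpha\}$ and then taking a normalized sum. The ``standard interpolation refinement'' is not benign: Proposition \ref{interp:prop:dist} delivers $|P_\alpha(\ualpha)|\gtrsim c_3^{-T^{3/2}}\dist(\alpha,\cS)$ precisely because $P_\alpha$ is only required to vanish on $\cS$; once you additionally force vanishing at the conjugates $\beta\in Z_D\setminus\{\alpha\}$, the analogous lower bound would involve $\dist\bigl(\alpha,\cS\cup(Z_D\setminus\{\alpha\})\bigr)$, which can be much smaller than $\dist(\alpha,\cS)$ if some conjugate $\beta$ is very close to $\alpha$. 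Controlling the gaps between conjugate points is an arithmetic (discriminant/height) input that you neither supply nor cite. Second, the normalization constant $|a_D|$ in the factorization $|F_D(P)|=|a_D|\prod_{\alpha}|P(\ualpha)|$ is not at all elementary to bound; you assert it is ``handled via the bounds on $\deg(Z_D)$ and $h(Z_D)$'' but give no inequality, and the relation between $\log|a_D|$, $Dh(Z_D)$ and $D\deg(Z_D)$ is exactly the delicate content you need.

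Both gaps are circumvented in the paper by a single citation: Proposition 2.3 of \cite{R2012} (a Laurent--Roy type height estimate) bounds $\sum_{\ualpha\in\uZ_D}\log\sup\{|P(\ualpha)|\,;\,P\in\cC_D\}$ above by $h_{\cC_D}(Z_D)-Dh(Z_D)+9\log(3)\,D\deg(Z_D)$, crucially permitting a \emph{different} optimizing polynomial for each point $\alpha$. Then one only needs, for each $\alpha$ separately, that $e^{2D^\beta}P_\alpha\in\cC_D$ with $P_\alpha\in I_D^{(T)}$, $\|P_\alpha\|\le 1$, and $|P_\alpha(\ualpha)|\gtrsim\dist(\alpha,\cS)$ — which is Proposition \ref{interp:prop:dist} combined with $(3/2)\sigma<\beta$ to absorb the $c_3^{T^{3/2}}$ loss. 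Summing, the extra terms $-Dh(Z_D)+9\log(3)D\deg(Z_D)-\tfrac12 D^\beta\deg(Z_D)$ are $\le 0$ for large $D$ since $\beta>1$, and Proposition \ref{cons:prop2} concludes. Without a quantitative substitute for \cite[Prop.~2.3]{R2012} — or a separate argument bounding both $|a_D|$ and the conjugate-point separation — your proposal is incomplete.
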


\begin{proof}
Since $Z_D$ is a $\bQ$-subvariety of \/$\bP^2(\bC)$ of dimension $0$,
Proposition 2.3 of \cite{R2012} gives
\[
 \sum_{\ualpha\in\uZ_D} \log \sup\{|P(\ualpha)|\,;\, P\in\cC_D\}
 \le h_{\cC_D}(Z_D)-Dh(Z_D)+9\log(3)D\deg(Z_D)
\]
where $\uZ_D$ denote a set of representatives of the points of
$Z_D$ by elements of $\bC^3$ of norm $1$.  For each
$P\in I_D^{(T)}$ with $\|P\|=1$, we have $e^{2D^\beta}P\in\cC_D$.
So, for any $\alpha\in Z_D$ with representative $\ualpha\in\uZ_D$,
we obtain
\[
 2D^\beta + \log |I_D^{(T)}|_\alpha
 \le \log \sup\{|P(\ualpha)|\,;\, P\in\cC_D\}.
\]
Moreover, let $\cS=\{\gamma_0,\dots,\gamma_{T-1}\}$.
If $D$ is large enough, we have $T\le D^\sigma\le \binom{D+1}{2}$,
and Proposition \ref{interp:prop:dist} gives
\[
 \log \dist(\alpha,\cS)
 \le T^{3/2}\log(c_3)+\log |I_D^{(T)}|_\alpha
 \le \frac{D^\beta}{2}+\log |I_D^{(T)}|_\alpha
\]
using $(3/2)\sigma<1+\sigma<\beta$.  Combining these estimates,
we obtain
\begin{align*}
 \sum_{\alpha \in Z_D}
   \Big( D^\beta+ \log \, \dist(\alpha,\cS)\Big)
& \le
   -\frac{D^\beta}{2}\deg(Z_D)
   + \sum_{\ualpha \in \uZ_D}
       \log \sup\{|P(\ualpha)|\,;\, P\in\cC_D\} \\
& \le h_{\cC_D}(Z_D),
\end{align*}
and the conclusion follows using the upper bound for $h_{\cC_D}(Z_D)$
provided by Proposition \ref{cons:prop2}.
\end{proof}

For each integer $T\ge 1$, we define
\[
 \Gamma(T)=\{\gamma_i\,;\, |i|<T\}
=\{\gamma_{-(T-1)},\dots,\gamma_{T-1}\}.
\]
We now come to the main result of this section.

\begin{prop}
 \label{cons:prop:m}
If $D\ge D_0$ is sufficiently large, there exists an integer
$m$ with $0\le m < \lfloor D^\sigma\rfloor$ for which the
translate $\tZ_D=\tau^{-m}(Z_D)$ satisfies
\[
 \sum_{\alpha\in\tZ_D} \log\dist(\alpha,\Gamma(T^*))
 \le -\frac{T^*}{120}D^{\nu -\beta -2}
       \big(D^{\beta}\deg(\tZ_D)+Dh(\tZ_D)\big),
\]
for each integer $T^*$ with $1\le T^*\le D^\sigma$.
\end{prop}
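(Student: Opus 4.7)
The plan is to re-run the argument behind Corollary~\ref{cons:cor}, which rests on Propositions~\ref{cons:prop2} and \ref{interp:prop:dist}, but applied directly to the translated variety $\tZ_D = \tau^{-m}(Z_D)$ at each scale $T^* \in \{1,\ldots,T\}$ separately (with $T = \lfloor D^\sigma\rfloor$), after committing to a single value of $m$ that keeps all the translated auxiliary polynomials simultaneously usable. The natural candidate is $m = T-1$.

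For this choice of $m$ and any $T^* \in \{1,\ldots,T\}$, the $2T^*$ polynomials $\Phi^{m+i}\tP_D$ with $0 \le i < 2T^*$ enjoy three critical properties: (i) they lie in $\bZ[\uX]_D$ with $\|\Phi^{m+i}\tP_D\| \le e^{2D^\beta}$ by Proposition~\ref{not:prop}, since $m + i \le 3T-2 < 4T$; (ii) they all vanish on $\tZ_D$, because $\Phi^j\tP_D$ vanishes on $Z_D$ for every $0 \le j < 2T$ (as $Z_D \subseteq W_D$), and the shift by $\tau^{-m}$ transports this vanishing to $\tZ_D$; (iii) they are uniformly small at every point of $\Gamma(T^*)$, since $|\Phi^{m+i}\tP_D(\ugamma_j)| = |\tP_D(\ugamma_{j+m+i})| \le e^{-D^\nu/2}$ for $|j|<T^*$, because the index $j+m+i$ then lies in $[0, 4T-1]$.

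With the convex body $\cC_D^{(T^*)} := \{P \in \bC[\uX]_D : \|P\| \le e^{2D^\beta},\ \max_{|j|<T^*}|P(\ugamma_j)| \le e^{-D^\nu/2}\}$ and these $2T^*$ auxiliary polynomials, I propose to run the proof of Propositions~\ref{cons:prop2} and \ref{interp:prop:dist} essentially verbatim with $T^*$ substituted for $T$ throughout. The zero estimate of \cite[Thm.~5.6]{R2012} is invoked with $\Sigma = \{(\xi+jr, \eta s^j) : |j|<T^*\}$ (of cardinality $2T^*-1$) and $\Sigma_1 = \{(jr, s^j) : 0\le j\le D\}$ as before, with sumset of cardinality $D + 2T^* - 1 < \binom{D+2}{2}$; the interpolation estimate Proposition~\ref{interp:prop1} extends to the consecutive block $\gamma_{-T^*+1},\ldots,\gamma_{T^*-1}$ by the linear change of variable $\xi \mapsto \xi - (T^*-1)r$. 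The output is an inequality of the form
\[
\sum_{\alpha\in\tZ_D}\bigl[D^\beta + \log\dist(\alpha, \Gamma(T^*))\bigr] \le -\frac{T^*}{28}D^{\nu-\beta-2}\bigl(2D^\beta\deg(\tZ_D) + Dh(\tZ_D)\bigr),
\]
which, upon discarding the non-negative $D^\beta\deg(\tZ_D)$ term on the left and using $\tfrac{1}{28} > \tfrac{2}{120}$, immediately yields the desired conclusion, since Lemma~\ref{height:lemma1} gives $\deg(\tZ_D) = \deg(Z_D)$ and $|h(\tZ_D) - h(Z_D)| \le c_4 m\deg(Z_D) = O(T\deg(Z_D))$, absorbed into the main term for $D$ large.

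The principal obstacle is verifying that the scale-$T^*$ analogue of Proposition~\ref{cons:prop2} remains \emph{usefully} negative at every $T^* \in \{1,\ldots,T\}$: the dominant term $-(2T^*-1)U$ with $U = D^\nu/2$ must outweigh the correction $3YD^2 + 21\log(3)D^3 = O(D^{\beta+2})$. At the largest scale $T^* = T$ this is exactly what the proof of Proposition~\ref{cons:prop2} already verifies from $\nu+\sigma > 2+\beta$, but at small $T^*$ it needs the full strength of the constraint \eqref{intro:thm:eq1} (including the correction for $\sigma<3/2$) and may well require a separate, complementary argument exploiting the pigeonhole bound $\max_{\alpha'\in Z_D}\omega(\alpha') \ge A'/\deg(Z_D) \ge D^\beta$ (so that at least one $\alpha^*\in Z_D$ is within distance $e^{-D^\beta}$ of $\gamma_{k^*(\alpha^*)}$, and after shifting by $m$ the corresponding point of $\tZ_D$ singlehandedly dominates the sum for small $T^*$). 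Combining the two regimes via the single choice $m = T-1$ is where the parameter-balancing in \eqref{intro:thm:eq1} is genuinely needed.
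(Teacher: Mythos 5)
Your proposal runs aground on a structural point that cannot be fixed by more careful bookkeeping: the machinery of Proposition~\ref{cons:prop2} (via \cite[Prop.~6.1, 6.2]{R2012}) does not accept a prescribed $\bQ$-subvariety as input and estimate a height for it; it \emph{produces} a subvariety as output. If you re-run that argument with the modified convex body $\cC_D^{(T^*)}$ and the shifted auxiliary polynomials $\Phi^{m+i}\tP_D$ ($0\le i<2T^*$), you obtain, for each scale $T^*$, some new $\bQ$-subvariety $Z_D^{(T^*)}$ of small relative height --- but nothing forces it to be, or even to intersect, the translate $\tZ_D = \tau^{-m}(Z_D)$ of the \emph{already fixed} $Z_D$ from Proposition~\ref{cons:prop2}. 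Yet the statement you need to prove concerns a single $\tZ_D$ uniformly over $T^* \in \{1,\dots,\lfloor D^\sigma\rfloor\}$, and the later sections (Corollaries~\ref{cons:cor2}, \ref{cons:cor3} and Section~\ref{sec:proof}) genuinely require this uniformity across scales for one and the same variety. A second, related defect is the fixed choice $m = T-1$: this only works if the points of $Z_D$ happen to cluster near $\gamma_{T-1}$, and there is no reason for that. The correct $m$ depends on $Z_D$.

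The paper's actual argument is purely combinatorial and does not revisit the convex-body construction at all. Starting from the single inequality of Corollary~\ref{cons:cor} (at scale $T$), one assigns to each $\alpha\in Z_D$ the index $t(\alpha)\in\{0,\dots,T-1\}$ of its nearest $\gamma_i$ and the quantity $\delta(\alpha)=\min\{0,\,D^\beta+\log\dist(\alpha,\gamma_{t(\alpha)})\}\le 0$; the corollary says $\sum\delta(\alpha)\le -4D^\sigma B$ with $B=\tfrac{1}{120}D^{\nu-\beta-2}(2D^\beta\deg(Z_D)+Dh(Z_D))$. One then runs a dyadic pigeonhole: starting from the full interval $I_k$ of length $2^k\approx D^\sigma$, repeatedly halve to obtain $I_k\supseteq\cdots\supseteq I_0=\{m\}$ with the invariant $\sum_{t(\alpha)\in I_j}\delta(\alpha)\le -2^{j+1}B$, choosing at each step the half carrying the more negative sum. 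The singleton $I_0$ picks out $m$, and because $I_j\subseteq\{t: |t-m|<2^j\}$ and $\delta\le 0$, the mass within radius $2^j$ of $m$ is at least $2^{j+1}B$ in absolute value \emph{for every} $j$ simultaneously. Given $T^*$, take $j$ with $2^j\le T^*\le 2^{j+1}$; this gives $\sum_{|t(\alpha)-m|<T^*}\delta(\alpha)\le -T^*B$, and the remaining work (using Lemmas~\ref{not:lemma2} and \ref{height:lemma1}) converts this into the statement about $\tZ_D=\tau^{-m}(Z_D)$. This dyadic choice of $m$ is exactly what lets a single estimate at the top scale control all smaller scales at once --- the obstacle you flag at the end of your proposal (the failure of $-(2T^*-1)U$ to dominate for small $T^*$) never arises because the information at small $T^*$ is not re-derived but \emph{inherited} proportionally from the top scale.
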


\begin{proof}
Fix $D$ large enough so that we have both
$D^{\beta-\sigma-1} \ge \max\{c_1,2c_4\}$ and $D^{\nu-2}\ge 240$,
and that Corollary \ref{cons:cor} applies.
Let $T=\lfloor D^\sigma\rfloor$.  For each $\alpha\in Z_D$,
choose an index $t(\alpha)$ with $0\le t(\alpha)<T$ such that
$\dist(\alpha,\gamma_{t(\alpha)})$ is minimal and set
\[
 \delta(\alpha)
  = \min\big\{0,\ D^\beta+\log \dist(\alpha,\gamma_{t(\alpha)})\big\}.
\]
With this notation, Corollary \ref{cons:cor} yields
\begin{equation*}
 \sum_{\alpha\in Z_D} \delta(\alpha) \le -4D^\sigma B
 \quad
 \text{where}
 \quad
 B = \frac{1}{120}D^{\nu-\beta-2}\big(2D^\beta\deg(Z_D)+Dh(Z_D)\big).
\end{equation*}
Let $k$ be the smallest integer with $2^k\ge D^\sigma$ and set
$I_k=\{0,\dots,2^k-1\}$.  By the above, we have
\[
 \sum_{\{\alpha\in Z_D\,;\,t(\alpha)\in I_k\}} \delta(\alpha)
 \le -2^{k+1} B.
\]
Starting from $I_k$, we choose recursively a descending sequence
of sets $I_k\supseteq I_{k-1}\supseteq\cdots\supseteq I_0$ such that,
for each $j=0,\dots,k$, the set $I_j$ consists of $2^j$ consecutive
integers and
\[
 \sum_{\{\alpha\in Z_D\,;\,t(\alpha)\in I_j\}} \delta(\alpha)
 \le -2^{j+1} B.
\]
Once $I_j$ has been constructed for some index $j$ with
$1\le j\le k$, it suffices to take for $I_{j-1}$ the set consisting
of the $2^{j-1}$ smallest elements of $I_j$ or its complement depending
on which yields the smallest sum.  In particular, $I_0$ consists of a
single integer $m$.  For $j=0,\dots,k$, this integer belongs to $I_j$.
So, we obtain $I_j \subseteq \{t\in\bZ\,;\, |t-m|<2^j\}$ and thus
\begin{equation}
 \label{cons:prop:m:eq3}
 \sum_{\{\alpha\in Z_D\,;\,|t(\alpha)-m|<2^j\}} \delta(\alpha)
 \le -2^{j+1} B
\end{equation}
because $\delta(\alpha)\le 0$ for each $\alpha\in Z_D$.  We also note
that $0\le m<T$ since applying \eqref{cons:prop:m:eq3} with $j=0$
shows that $m=t(\alpha)$ for at least one $\alpha\in Z_D$.

We claim that the translate $\tZ_D=\tau^{-m}Z_D$ has the right
property.  To show this, fix an integer $T^*$ with
$1\le T^*\le T$, and set
\[
 \cA=\{\alpha\in Z_D\,;\ |t(\alpha)-m|<T^*\}.
\]
Since the projective distance between any two points is at most $2$,
we find
\[
 \sum_{\alpha\in \tZ_D} \log\frac{\dist(\alpha,\Gamma(T^*))}{2}
  = \sum_{\alpha\in Z_D}
         \log\frac{\dist(\tau^{-m}(\alpha),\Gamma(T^*))}{2}
  \le \sum_{\alpha\in \cA}
         \log\frac{\dist(\tau^{-m}(\alpha),\Gamma(T^*))}{2}.
\]
For any $\alpha\in\cA$, Lemma \ref{not:lemma2} gives
\[
 \log\dist(\tau^{-m}(\alpha),\Gamma(T^*))
  \le c_1 m + \log\dist\big(\alpha,\tau^m(\Gamma(T^*))\big).
\]
Since $c_1 m< c_1D^\sigma \le D^\beta$ and
$\gamma_{t(\alpha)}\in \tau^m(\Gamma(T^*))$, this implies that
\[
 \log\frac{\dist(\tau^{-m}(\alpha),\Gamma(T^*))}{2}
   \le \min\big\{0,\
        D^\beta + \log\dist\big(\alpha,\gamma_{t(\alpha)}\big)
        \big\}
   = \delta(\alpha).
\]
Thus we conclude that
\begin{equation}
 \label{cons:prop:m:eq4}
 \sum_{\alpha\in \tZ_D} \log \dist(\alpha,\Gamma(T^*))
  \le \deg(\tZ_D) + \sum_{\alpha\in \cA} \delta(\alpha).
\end{equation}
To complete the proof, we note that $2^j\le T^*\le 2^{j+1}$
for some integer $j$ with $0\le j< k$.  Then, the set $\cA$
contains all $\alpha\in Z_D$ with $|t(\alpha)-m|<2^j$ and,
using \eqref{cons:prop:m:eq3}, we obtain
\begin{equation}
 \label{cons:prop:m:eq5}
 \sum_{\alpha\in \cA} \delta(\alpha)
  \le \sum_{\{\alpha\in Z_D\,;\,|t(\alpha)-m|<2^j\}} \delta(\alpha)
  \le -2^{j+1} B
  \le -T^*B.
\end{equation}
By Lemma \ref{height:lemma1}, we have $\deg(Z_D)=\deg(\tZ_D)$ and
$h(Z_D)\ge h(\tZ_D)-c_4m\deg(Z_D)$.  Since $m \le D^\sigma$,
this gives $Dh(Z_D)\ge Dh(\tZ_D)-(1/2)D^\beta\deg(\tZ_D)$ and so
\[
 B \ge \frac{1}{120} D^{\nu - \beta -2}
       \big(D^\beta\deg(\tZ_D)+Dh(\tZ_D)\big)
       + \frac{D^{\nu-2}}{240}\deg(\tZ_D).
\]
The requested estimate follows by combining \eqref{cons:prop:m:eq4}
and \eqref{cons:prop:m:eq5} with this inequality, using the
hypothesis that $D^{\nu-2}\ge 240$.
\end{proof}

Again, we may adjust $D_0$ so that Propositions \ref{cons:prop1}
and \ref{cons:prop:m} apply to each integer $D$ with $D\ge D_0$.
For each of those integers $D$, we fix a choice of translate
$\tZ_D$ of $Z_D$, as in Proposition \ref{cons:prop:m}.  Since
$h(\tZ_D)\ge 0$, we readily deduce the following consequence.

\begin{cor}
 \label{cons:cor2}
For any integers $D,T^*$ with $D\ge D_0$ and $0\le T^*\le \lfloor
D^\sigma\rfloor$, there exists a point $\alpha$ of $\tZ_D$
such that
\[
 \log\dist(\alpha,\Gamma(T^*))
   \le -\frac{T^*D^{\nu-2}}{120}.
\]
\end{cor}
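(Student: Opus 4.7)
The plan is to deduce this corollary directly from Proposition \ref{cons:prop:m} by a pigeonhole/averaging argument, with only a trivial simplification of the right-hand side. First, I would note that the case $T^*=0$ is either vacuous (if one adopts the convention $\dist(\alpha,\emptyset)=\infty$, the inequality has right-hand side $0$ and so is trivial after picking any point) or not actually intended; in any case the substantive content is for $1\le T^*\le \lfloor D^\sigma\rfloor$, which is exactly the range covered by Proposition \ref{cons:prop:m}.

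For such $T^*$, I would apply Proposition \ref{cons:prop:m} to the fixed translate $\tZ_D$ to get
\[
 \sum_{\alpha\in\tZ_D} \log\dist(\alpha,\Gamma(T^*))
 \le -\frac{T^*}{120}D^{\nu-\beta-2}
       \bigl(D^{\beta}\deg(\tZ_D)+Dh(\tZ_D)\bigr).
\]
Since $\tZ_D$ is a $\bQ$-subvariety of dimension $0$, its height is non-negative, so I may drop the $Dh(\tZ_D)$ contribution from the parenthesis. After cancelling the $D^{\beta}$ with $D^{-\beta}$, this simplifies to
\[
 \sum_{\alpha\in\tZ_D} \log\dist(\alpha,\Gamma(T^*))
 \le -\frac{T^*D^{\nu-2}}{120}\deg(\tZ_D).
\]

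Finally, since the number of points in $\tZ_D$ is exactly $\deg(\tZ_D)$, the sum on the left is an average of $\deg(\tZ_D)$ terms multiplied by $\deg(\tZ_D)$. Therefore the minimum term of the sum is at most the average, namely at most $-T^*D^{\nu-2}/120$, giving a point $\alpha\in\tZ_D$ with the required estimate. I do not anticipate any genuine obstacle here: the work is entirely packaged into Proposition \ref{cons:prop:m}, and the present statement is essentially the ``there exists a point'' consequence obtained by bounding the minimum of the sum by its average, together with the fact that heights of zero-dimensional $\bQ$-subvarieties are non-negative.
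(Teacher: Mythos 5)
Your proof is correct and is essentially the paper's own argument (which is stated in one line: ``Since $h(\tZ_D)\ge 0$, we readily deduce the following consequence''): drop the nonnegative term $Dh(\tZ_D)$, cancel $D^{\beta}$, and use that $|\tZ_D|=\deg(\tZ_D)$ for a $0$-dimensional $\bQ$-subvariety so that some term of the sum is at most the average. One small slip in your parenthetical: with the convention $\dist(\alpha,\emptyset)=\infty$ the inequality at $T^*=0$ would \emph{fail} (the left side is $+\infty$, the right side $0$), not hold trivially; but as you correctly note, the intended and substantive range is $1\le T^*\le\lfloor D^\sigma\rfloor$, which is exactly what Proposition~\ref{cons:prop:m} covers.
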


For the choice of $T^*=1$, this reduces to the following statement.

\begin{cor}
 \label{cons:cor3}
For each integer $D$ with $D\ge D_0$, there exists $\alpha\in\tZ_D$ such that
\[
 \log \dist(\alpha,\gamma_0)
   \le -\frac{D^{\nu-2}}{120}.
\]
\end{cor}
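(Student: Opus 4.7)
The plan is simply to specialize Corollary \ref{cons:cor2} to the choice $T^* = 1$. Recall from the definition just before Proposition \ref{cons:prop:m} that $\Gamma(T) = \{\gamma_i \,;\, |i| < T\}$, so taking $T^* = 1$ gives $\Gamma(1) = \{\gamma_0\}$, and consequently $\dist(\alpha, \Gamma(1)) = \dist(\alpha, \gamma_0)$ for every $\alpha \in \bP^2(\bC)$. The admissibility hypothesis $0 \le T^* \le \lfloor D^\sigma\rfloor$ then reduces to $1 \le \lfloor D^\sigma\rfloor$, which holds for every $D \ge D_0$ since $\sigma \ge 1$ by \eqref{intro:thm:eq1} (enlarging $D_0$ if necessary so that $D_0^\sigma \ge 1$).

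Substituting $T^* = 1$ into the conclusion of Corollary \ref{cons:cor2} therefore directly yields the existence of a point $\alpha \in \tZ_D$ with $\log\dist(\alpha,\gamma_0) \le -D^{\nu-2}/120$, which is precisely the claim. There is no genuine obstacle: the statement is being singled out as a named corollary only because this single-point specialization is the exact quantitative ingredient that will be fed into the next section, where it is combined with the height lower bound from Proposition \ref{height:prop} to force algebraicity of $\xi$ and $\eta$.
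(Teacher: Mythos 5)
Your proof is correct and is exactly what the paper does: the paper states Corollary \ref{cons:cor3} as the specialization $T^*=1$ of Corollary \ref{cons:cor2}, using $\Gamma(1)=\{\gamma_0\}$. (The caveat about enlarging $D_0$ is harmless but unnecessary, since $D_0\in\bN^*$ and $\sigma\ge 1$ already give $\lfloor D^\sigma\rfloor\ge D\ge 1$.)
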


%%%%%%%%%%%%%%%%%%%%%%%%%%%%%%%%%%%%%%%
%%%%%%%%%%%%%%%%%%%%%%%%%%%%%%%%%%%%%%%%

\section{Proof of the main theorem}
\label{sec:proof}

As in the previous section, we assume that the hypotheses of
Theorem \ref{intro:thm} are satisfied and that $|s|>1$.  If
$\xi,\eta\in\Qbar$, then Liouville's inequality implies that
$P_D(\xi+ir,\eta s^i)=0$ for each $i=0,\dots,\lfloor D^{\beta-1}\rfloor$,
when $D$ is large enough.  So, from now on, we
further assume that $(\xi, \eta) \notin \Qbar\times\Qbar^*$,
and seek for a contradiction.  The latter hypothesis implies that
$\dist(\alpha,\gamma_0)>0$ for each $\alpha\in\bP^2(\Qbar)$
and so, by virtue of Corollary \ref{cons:cor3}, we deduce that
\[
 \lim_{D\to\infty} \max\{\deg(\tZ_D),\, h(\tZ_D)\}
 = \infty.
\]

Let $D\ge D_0$ be an arbitrarily large integer, and let $D^*\ge 0$
be any integer satisfying
\begin{equation}
 \label{proof:eq1}
 \deg(\tZ_D) > 2(D^*)^{2-\sigma}
 \quad\text{or}\quad
 h(\tZ_D) > 6 (D^*)^{1+\beta-\sigma}.
\end{equation}
If $D$ is sufficiently large, we may choose $D^*\ge D_0$
and Proposition \ref{cons:prop1} gives
$\tau^i(\tZ_D)\nsubseteq W_{D^*}$ for each integer $i$ with
$|i|<3T^*$, where $T^*=\lfloor(D^*)^\sigma\rfloor$.
Since $\tau^m(\tZ_{D^*})=Z_{D^*} \subseteq W_{D^*}$ for some
$m\in\bZ$ with $0\le m < T^*$, this implies that $\tau^i(\tZ_D)
\neq \tau^m(\tZ_{D^*})$ for each $i$ with $|i|<3T^*$, and thus
\begin{equation}
 \label{proof:eq2}
 \tau^i(\tZ_D) \neq \tau^j(\tZ_{D^*})
 \quad
 (|i|,|j|<T^*).
\end{equation}
In particular, this means that $D^*\neq D$.  Thus,
if $D$ is large enough, the largest integer $D^*$ satisfying
\eqref{proof:eq1} lies in the range $D_0\le D^*< D$ and,
for that choice of $D^*$, we also have
\begin{equation}
 \label{proof:eq3}
 \deg(\tZ_D) \le 3(D^*)^{2-\sigma}
 \et
 h(\tZ_D) \le 7 (D^*)^{1+\beta-\sigma}.
\end{equation}
If $\sigma\ge 3/2$, this yields
$\deg(\tZ_D) \le 3(D^*)^{2-\sigma}\le 3(D^*)^{\sigma-1}$.
This is therefore a situation where the following
hypothesis is fulfilled.

\subsection*{Case 1.}
Suppose that, for each sufficiently large integer $D$,
we have $\deg(\tZ_D)\le 3(D^*)^{\sigma-1}$ with the
above choice of $D^*$.

As $D^*$ tends to infinity with $D$, this implies that
we also have $\deg(\tZ_{D^*})\le 3(D^*)^{\sigma-1}$ if $D$
is large enough.  By Corollary \ref{cons:cor2}, there
exist points $\alpha\in\tZ_D$ and $\alpha^*\in\tZ_{D^*}$
such that
\[
 \log\dist(\alpha,\Gamma(T^*))
   \le -\frac{T^*D^{\nu-2}}{120}
 \et
 \log\dist(\alpha^*,\Gamma(T^*))
   \le -\frac{T^*(D^*)^{\nu-2}}{120},
\]
where $T^*=\lfloor (D^*)^\sigma\rfloor$.
Choose integers $i,j$ with $|i|,|j|<T^*$ such that
$\gamma_i$ and $\gamma_j$ are respectively closest
to $\alpha$ and $\alpha^*$ within the set $\Gamma(T^*)$.
Using Lemma \ref{not:lemma2} and assuming that $D^*$
is large enough, we find
\[
 \log \dist(\tau^{-i}(\alpha),\gamma_0)
 \le c_1T^* + \log\dist(\alpha,\Gamma(T^*))
 \le -\log(3) - \frac{1}{150}(D^*)^{\nu+\sigma-2}
\]
and the same with $i$ and $\alpha$ replaced respectively
by $j$ and $\alpha^*$.  Thanks to the weak triangle inequality
satisfied by the distance, this yields
\begin{equation}
\label{proof:case1:eq1}
 \log \dist(\tau^{-i}(\alpha),\tau^{-j}(\alpha^*))
 \le -\frac{1}{150}(D^*)^{\nu+\sigma-2}.
\end{equation}
Consider the $\bQ$-subvarieties of \/$\bP^2(\bC)$ given by
$Z=\tau^{-i}(\tZ_D)$ and $Z^*=\tau^{-j}(\tZ_{D^*})$.
They contain respectively the points $\tau^{-i}(\alpha)$
and $\tau^{-j}(\alpha^*)$ and, by \eqref{proof:eq2}, they are
distinct.  Moreover, their degrees are the same as those
of $\tZ_D$ and $\tZ_{D^*}$ respectively.  So they are bounded
above by $3(D^*)^{\sigma-1}$.  Assuming $D^*$ large enough, we
also find
\[
 h(Z) \le c_4T^*\deg(\tZ_D) + h(\tZ_D)
       \le 8(D^*)^{1+\beta-\sigma}
\]
and similarly $h(Z^*) \le 8(D^*)^{1+\beta-\sigma}$, using
Lemma \ref{height:lemma1} together with \eqref{proof:eq3}.
Applying Proposition \ref{height:prop} to the singleton
$\cA=\{(\tau^{-i}(\alpha),\tau^{-j}(\alpha^*))\}$, we thus obtain
\[
 \log \dist(\tau^{-i}(\alpha),\tau^{-j}(\alpha^*))
 \ge -63(D^*)^{2\sigma-2}-48(D^*)^{\beta}.
\]
Since $\nu>2+\beta-\sigma$ and $\beta>\sigma>2\sigma-2$, this
contradicts \eqref{proof:case1:eq1} if $D^*$ is sufficiently
large or, equivalently, if $D$ is sufficiently large.

\subsection*{Case 2.} As the previous case is ruled out,
there exist arbitrarily large values of $D$ for which
$\deg(\tZ_D)> 3(D^*)^{\sigma-1}$.

In view of the observation preceding Case 1, this implies that
$\sigma<3/2$.

Put $Z=\tau^{T^*-1}(\tZ_D)$.  As observed at the beginning of the
proof, this $\bQ$-subvariety of \/$\bP^2(\bC)$ is not contained in $W_{D^*}$
and so there exists an integer $j$ with $0\le j<2T^*$ such that
the polynomial $P=\Phi^j(\tP_{D^*})\in \bZ[\uX]_{D^*}$ does not
belong to the ideal of $Z$.  By Proposition 2.3 of \cite{R2012},
this implies that
\[
 \sum_{\ualpha\in\uZ} \log |P(\ualpha)|
 \ge -7\log(3)D^*\deg(Z)-D^*h(Z)
\]
where $\uZ$ denotes a set of representatives of the elements of
$Z$ by points of $\bC^3$ of norm $1$.  By Lemma
\ref{height:lemma1}, we also have $\deg(Z)=\deg(\tZ_D)$ and
$h(Z)\le c_4T^*\deg(\tZ_D)+h(\tZ_D)$.  Therefore, assuming $D^*$
sufficiently large, the previous estimate gives
\begin{equation}
 \label{proof:case2:eq1}
 \sum_{\ualpha\in\uZ} \log |P(\ualpha)|
 \ge -(c_4+7\log 3)(D^*)^{\sigma+1}\deg(\tZ_D) - D^*h(\tZ_D).
\end{equation}

Fix temporarily a point $\alpha\in Z$, set $\talpha =
\tau^{1-T^*}(\alpha) \in \tZ_D$, choose an integer
$i$ with $|i| < T^*$ for which $\dist(\talpha,\gamma_i)$
is minimal, and set $k=i+T^*-1$.  By Lemma \ref{not:lemma3},
we have
\[
 |P(\ualpha)|
  \le \left|P\left(\frac{\ugamma_k}{\|\ugamma_k\|}\right)\right|
       +D^*\binom{D^*+2}{2}\|P\|\dist(\alpha,\gamma_k)
\]
where $\ualpha$ denotes the representative of $\alpha$ in $\uZ$.
Since $0\le j,k<2T^*$, Proposition \ref{not:prop} gives $\|P\|\le
\exp(2(D^*)^\beta)$ and
\[
 |P(\ugamma_k)|
   = |\tP_{D^*}(\ugamma_{j+k})|
   \le \exp(-(1/2)(D^*)^\nu).
\]
By Lemma \ref{not:lemma2}, we also have
\[
 \dist(\alpha,\gamma_k)
  \le c_1T^*+\dist(\talpha,\gamma_i)
   = c_1T^*+\dist(\talpha,\Gamma(T^*)).
\]
Combining these estimates, we conclude that
\begin{equation}
\label{proof:case2:eq2}
 \log |P(\ualpha)|
  \le \max\left\{-\frac{(D^*)^\nu}{3},\,
       3(D^*)^\beta + \log \frac{\dist(\talpha,\Gamma(T^*))}{2}\right\}
\end{equation}
if $D^*$ is sufficiently large (independently of the choice of $\alpha$).

Suppose first that there exists a point $\ualpha_0\in\uZ$ for which
$\log |P(\ualpha_0)| \le -(D^*)^\nu/3$.  Then, by the above, we obtain
\[
 \sum_{\ualpha\in\uZ} \log |P(\ualpha)|
 \le -\frac{1}{3}(D^*)^\nu + 3(D^*)^\beta\deg(Z),
\]
since the distance is bounded above by $2$.
If $D^*$ is sufficiently large, this contradicts \eqref{proof:case2:eq1}
in view of the estimates \eqref{proof:eq3} and of the fact that
$\nu>2+\beta-\sigma$.

We may therefore assume that $\log |P(\ualpha)| > -(D^*)^\nu/3$ for each
$\ualpha\in\uZ$ and so \eqref{proof:case2:eq2} yields
\begin{align*}
 \sum_{\ualpha\in\uZ} \log |P(\ualpha)|
 &\le 3(D^*)^\beta\deg(Z)
      + \sum_{\talpha\in\tZ_D}\log\dist(\talpha,\Gamma(T^*))\\
 &\le 3(D^*)^\beta\deg(\tZ_D)
     - \frac{T^*}{120}D^{\nu-\beta-2} \big(D^{\beta}\deg(\tZ_D)+Dh(\tZ_D)\big)
\end{align*}
where the second estimate follows from Proposition \ref{cons:prop:m}
together with the fact that $Z$ and $\tZ_D$ have the same degree.
Comparing this upper bound with \eqref{proof:case2:eq1} and assuming
that $D$ is sufficiently large, we deduce that
\[
 D^*h(\tZ_D)
   \ge  \frac{(D^*)^\sigma}{150}
       D^{\nu-\beta-2} \big(D^{\beta}\deg(\tZ_D)+Dh(\tZ_D)\big),
\]
thus
\[
 D^*h(\tZ_D)
   \ge \frac{(D^*)^\sigma}{150} D^{\nu-2} \deg(\tZ_D)
 \et
 D^*
   \ge \frac{(D^*)^\sigma}{150} D^{\nu-\beta-1}.
\]
Using the upper bound for $h(\tZ_D)$ provided by \eqref{proof:eq3}
and the hypothesis that $\deg(\tZ_D)\ge 3(D^*)^{\sigma-1}$, these
inequalities yield
\[
 350 (D^*)^{3+\beta-3\sigma}\ge D^{\nu-2}
 \et
 150 D^{1+\beta-\nu}\ge (D^*)^{\sigma-1}.
\]
As this holds for arbitrarily large values of $D$, we conclude that
$1+\beta-\nu\ge 0$ and
\[
 (3+\beta-3\sigma)(1+\beta-\nu) \ge (\sigma-1)(\nu-2).
\]
Upon writing $\nu=2+\beta-\sigma+\delta$, this inequality is equivalent
to $(2+\beta-2\sigma)\delta\le (\sigma-1)(3-2\sigma)$ which contradicts
the last of the conditions \eqref{intro:thm:eq1} in Theorem \ref{intro:thm},
since $\sigma<3/2$.  This final contradiction completes the proof
of the theorem.

%\newpage


\begin{thebibliography}{99}

\bibitem{Ge}
  A.~O.~Gel'fond,
  \textit{Transcendental and algebraic numbers},
  GITTL, Moscow, 1952;
  English.\ trad., Dover, New York, 1960, vii+190pp.

\bibitem{LR}
  M.~Laurent and D.~Roy,
  \textit{Criteria of algebraic independence with multiplicities and approximation by hypersurfaces},
  J.\ reine angew.\ Math.\ \textbf{536} (2001), 65--114.

\bibitem{Mah} K.~Mahler,
  \textit{On a class of entire functions},
  Acta Math.\ Acad.\ Sci.\ Hungar.\ \textbf{18} (1967), 83--96.

\bibitem{MW}
  D.~W.~Masser and G.~W\"ustholz,
  \textit{Zero estimates on group varieties\ I},
  Invent.\ Math.\ \textbf{64} (1981), 489--516.

\bibitem{Ng} N.~A.~V.~Nguyen,
  \textit{On Some Problems in Transcendental Number Theory and Diophantine Approximation},
  Ph.D.\ thesis, University of Ottawa, 2014, 141pp; %\hfill\break
  http://hdl.handle.net/10393/30350.

\bibitem{PhLZ}
  P.~Philippon,
  \textit{Lemme de z\'eros dans les groupes alg\'ebriques commutatifs},
  Bull.\ Soc.\ Math.\ France \textbf{114} (1986), 355--383.

\bibitem{Ph}
  P.~Philippon,
  \textit{Crit\`eres pour l'ind\'ependance alg\'ebrique},
  Pub.\ Math.\ IHES {\bf 64} (1986), 5--52.

\bibitem{R2001}
  D.~Roy,
  \textit{An arithmetic criterion for the values of the exponential function},
  Acta Arith.\ \textbf{97} (2001), 183--194.

%\bibitem{R2002}
%  D.~Roy,
%  \textit{Interpolation formulas and auxiliary functions},
%  J.~Number Theory \textbf{94} (2002), 248--285.

%\bibitem{R2008}
%  D.~Roy,
%  \textit{Small value estimates for the multiplicative group},
%  Acta Arith.\ \textbf{135} (2008), 357-393.

%\bibitem{R2010}
%  D.~Roy,
%  \textit{Small value estimates for the additive group},
%  Int.~J.~Number Theory \textbf{6} (2010), 919-956.

\bibitem{R2012}
  D.~Roy,
  \textit{A small value estimate for $\Ga\times\Gm$},
  Mathematika {\bf 59} (2013), 333--363.

\bibitem{Ti}
  R.~Tijdeman,
  \textit{An auxiliary result in the theory of transcendental numbers},
  J.~Number Theory \textbf{5} (1973), 80--94.

\bibitem{Wa}
   M.~Waldschmidt,
   \textit{Transcendance et exponentielles en plusieurs variables},
   Invent.\ \penalty-500 Math.\ \textbf{63} (1981), 97--127.

\bibitem{WaLivre}
  M.~Waldschmidt,
  \textit{Diophantine approximation on linear algebraic groups.
  Transcendence properties of the exponential function in several variables},
  Grundlehren Math.\ Wiss.\ vol.~326,
  Springer-Verlag, Berlin, 2000, xxiv+633pp.



\end{thebibliography}
\end{document}